\documentclass[11pt,oneside,reqno]{amsart}

\usepackage[T1]{fontenc}
\usepackage{amssymb}
\usepackage{algpseudocode}
\usepackage{algorithm}
\usepackage{verbatim}
\usepackage{graphicx}
\usepackage{placeins}
\usepackage{listings}
\usepackage{xcolor}
\usepackage{enumerate}
\usepackage{subcaption}
\usepackage[noadjust]{cite}
\RequirePackage{dsfont} 
 \scrollmode
\allowdisplaybreaks
\usepackage{graphicx}
\usepackage{epstopdf}
\usepackage{hyperref} 
\usepackage{mathtools}
\usepackage{float}

\usepackage{xcolor}

\usepackage{amsmath}
\usepackage{tikz}
\usepackage{enumerate}
\usepackage{xcolor}
\usepackage{enumitem}


\definecolor{codegreen}{rgb}{0,0.6,0}
\definecolor{codegray}{rgb}{0.5,0.5,0.5}
\definecolor{codepurple}{rgb}{0.58,0,0.82}
\definecolor{backcolour}{rgb}{0.95,0.95,0.92}

\lstdefinestyle{list_style}{
  backgroundcolor=\color{backcolour}, commentstyle=\color{codegreen},
  keywordstyle=\color{magenta},
  numberstyle=\tiny\color{codegray},
  stringstyle=\color{codepurple},
  basicstyle=\ttfamily\footnotesize,
  breakatwhitespace=false,         
  breaklines=true,                 
  captionpos=b,                    
  keepspaces=true,                 
  numbers=left,                    
  numbersep=5pt,                  
  showspaces=false,                
  showstringspaces=false,
  showtabs=false,                  
  tabsize=2
}

\newcommand{\xdasharrow}[2][->]{
\tikz[baseline=-\the\dimexpr\fontdimen22\textfont2\relax]{
\node[anchor=south,font=\scriptsize, inner ysep=1.5pt,outer xsep=2.2pt](x){#2};
\draw[shorten <=3.4pt,shorten >=3.4pt,dashed,#1](x.south west)--(x.south east);
}
}

\newcommand{\DEBUG}{}

\ifdefined\DEBUG%

  \def\rem#1{{\marginpar{\raggedright\scriptsize #1}}}
  \newcommand{\pmr}[1]{\rem{\color{blue}{$\bullet$ #1}}}
  \newcommand{\ppr}[1]{\rem{\color{red}{$\bullet$ #1}}}
 \else%

  \newcommand{\ppr}[1]{}
  \newcommand{\pmr}[1]{}
 \fi

\def\rho{\varrho_1}

\theoremstyle{plain}
\newtheorem{theorem}{Theorem}
\newtheorem{lemma}{Lemma}
\newtheorem{fact}{Fact}

\theoremstyle{definition}
\newtheorem{remark}{Remark}

\theoremstyle{definition}
\newtheorem{assumption}[theorem]{Assumption}

\begin{document}

\title[On the error of the Euler scheme for DDEs under inexact information]{On the error of the Euler scheme for approximation of solutions of nonlinear DDEs under inexact information}

\author[P. Przyby\l owicz]{Pawe{\l } Przyby\l owicz}
 \address{AGH University of Krakow,
Faculty of Applied Mathematics,
 Al. A.~Mickiewicza 30, 30-059 Krak\'ow, Poland}
 \email{pprzybyl@agh.edu.pl}

\author[M. Wi\c{a}cek]{Martyna Wi\c{a}cek}
 \address{AGH University of Krakow,
Faculty of Applied Mathematics,
 Al. A.~Mickiewicza 30, 30-059 Krak\'ow, Poland}
\email{martynawiacek@agh.edu.pl, corresponding author}

\begin{abstract}

We analyze the behavior of the Euler method for delay differential equations under nonstandard assumptions on the right-hand-side function $f$, when evaluations of $f$ are corrupted by information noise. We consider a deterministic inexact-information model in which perturbations affect the numerical evaluation of the right-hand side and may propagate across successive delay intervals.

We provide theoretical upper bounds on the Euler discretization error in two settings: first, under global Lipschitz assumptions, and second, under a local one-sided Lipschitz condition combined with local H\"older continuity. In the globally Lipschitz case we obtain stability with respect to perturbations and derive a global error estimate in terms of the time step $h$ and the noise level $\delta$. In the weaker regularity regime, we show that the interaction between the delay term and information noise leads to a more delicate error structure, including a hierarchy of exponents depending on the H\"older parameter of the delayed argument. This turned out to be different for DDEs, compared to ODEs.

We also present numerical experiments illustrating the convergence behavior of the noisy Euler scheme and confirming the theoretical estimates. In particular, the experiments show how the accumulation of perturbations becomes more pronounced when the regularity in the delayed variable is weaker.
\newline
\newline
\textbf{Key words:} Euler algorithm, DDEs, exact/inexact information, informational noise, one-sided Lipschitz condition, local H\"older continuity
\newline
\newline
\textbf{MSC 2010:} 65L05, 65L70
\end{abstract}
\maketitle
\tableofcontents
\section{Introduction}
In this paper, we study the approximation of solutions to delay differential equations (DDEs) with information noise using the Euler scheme.

Let us consider the problem of approximating the solution 
$z : [0, +\infty) \to \mathbb{R}^d$ of the multidimensional delay differential equation of the form
\begin{equation}\label{main_eq}
\begin{cases}
z'(t) = f(t, z(t), z(t - \tau)), & t \in [0, (n + 1)\tau], \\
z(t) = \eta, & t \in [-\tau, 0],
\end{cases}
\end{equation}
with a constant time lag $\tau \in (0, +\infty)$. Here $\eta \in \mathbb{R}^d$ is the initial condition, $n \in \mathbb{N}$ is a (finite and fixed) \emph{horizon parameter}, and the right-hand side function 
$f : [0, +\infty) \times \mathbb{R}^d \times \mathbb{R}^d \to \mathbb{R}^d$ 
for a fixed $d \in \mathbb{N}$ satisfies the  appropriate regularity conditions. In this work, we investigate the approximation error of the Euler scheme under two distinct assumptions on the right-hand side: first, a global Lipschitz condition; second, a local one-sided Lipschitz with Hölder regularity, which is a nonstandard assumption.

Delay differential equations (DDEs) are a well-established class of functional differential equations used to model systems in which the present rate of change depends not only on the current state but also on past states. Classical monographs such as \cite{bellen_zennaro}, \cite{hale_verduyn_lunel} provide a comprehensive theory of existence, uniqueness, and numerical approximation under global Lipschitz assumptions. These conditions ensure good stability properties of numerical schemes, including the Euler method, but they are often too restrictive for nonlinear or application-driven models.

In recent years, attention has shifted toward studying DDEs under weaker and more realistic regularity assumptions. In particular, an approximation theory for the Euler method in the setting of locally one-sided Lipschitz and H\"older continuous $f$'s has been developed in \cite{ncz_pp_pm}, where convergence and optimal-order error estimates were established for the noise-free case. Even more general Carath\'eodory-type right-hand sides, measurable in time and only continuous in the state variables, have been investigated in \cite{difonzo_przybylowicz_wu_2024}, where the existence of generalized solutions and stability under perturbations were analyzed.

On the other hand, the literature on perturbed or inexact information for DDEs is still very limited. While stochastic delay differential equations (SDDEs) with random noise in the dynamics have been extensively studied (see, e.g., \cite{buckwar_sdde_survey, kumar_sabanis_sdde, przybylowicz_wu_xie_2024}), much less is known about deterministic information noise affecting the numerical evaluation of the right-hand side function. Such perturbations naturally arise from finite precision, rounding and discretization errors, or inexact preprocessing steps in the computational pipeline. To the best of our knowledge, a systematic error analysis of the Euler scheme for DDEs under deterministic information noise (especially in the weak regularity regimes considered in this paper) has not yet been developed.

A closely related line of research has been developed for problems without delay, where the availability of only \emph{inexact (noisy) standard information} about the data is a natural modeling paradigm in information-based complexity. In the context of initial value problems for ODEs, robust algorithms and optimal error bounds under noisy evaluations of the right-hand side were studied, among others, in \cite{kacewicz_przybylowicz_2016}. More recent works analyze explicit/implicit \emph{randomized} Euler-type schemes for ODEs when only perturbed function values are accessible and investigate stability and optimality in such noise models, see, e.g., \cite{bochacik_przybylowicz_2022} as well as higher-order randomized Runge--Kutta constructions in \cite{bochacik_gocwin_morkisz_przybylowicz_2021}. In the SDE setting, inexact information models have been used to quantify how deterministic perturbations in the evaluation of the drift and diffusion coefficients, as well as perturbations of the driving Wiener process, affect strong approximation, in particular, optimal pointwise approximation and randomized Euler-type methods under noisy information were studied in \cite{morkisz_przybylowicz_2017}, and further developments include randomized Euler algorithms for SDEs with disturbed information, see, e.g., \cite{baranek_kaluza_morkisz_przybylowicz_sobieraj_2023}.  Finally, related issues arise already at the level of numerical integration: approximation of stochastic integrals under analytic noise models (including explicit links to low-precision computation) was investigated in \cite{kaluza_morkisz_przybylowicz_2019}.

From the practical perspective, studying inexact information is motivated by the fact that modern simulation pipelines rarely evaluate $f$ exactly. Perturbations may arise from finite-precision arithmetic, such as rounding and loss of significant digits, from implementations that use mixed or low numerical precision, for example to improve performance on GPUs or other accelerators, from approximate evaluations of transcendental functions, from surrogate or interpolatory approximations of coefficients, as well as from inexact preprocessing and data movement.
A deterministic bounded-noise model offers a tractable way to capture these effects and to understand the robustness of time-stepping schemes with respect to implementation-level inaccuracies. In DDEs this issue is further amplified by the delayed argument, as perturbations can propagate across consecutive delay intervals.

\vspace{2mm}

The main contributions of the paper are as follows.
\begin{enumerate}[label=(\roman*)]
\item We extend the error analysis of the Euler scheme for nonlinear delay differential equations to the setting of inexact information about the right-hand side. In particular, we derive stability estimates and global error bounds that quantify the dependence of the Euler error on the time step $h$, the noise level $\delta$, and the regularity parameters of $f$ under both global Lipschitz and one-sided Lipschitz/Hölder assumptions.
\item We show that under one-sided Lipschitz and Hölder conditions, the deterministic information noise may accumulate along the delay intervals, which leads to a characteristic hierarchy of error exponents in $h$ and $\delta$.
\item We present numerical experiments that confirm the theoretical convergence rates and illustrate the effect of noise accumulation.
\end{enumerate}
The paper is organized as follows. Section~2 introduces the inexact information model and the noisy Euler scheme. Section~\ref{sec:lipschitz} is devoted to the error analysis under global Lipschitz assumptions. Section~\ref{sec:onesidedlipschitz} contains the corresponding analysis in the one-sided Lipschitz and locally Hölder continuous case. Section~5 reports results of numerical experiments. Section~6 collects concluding remarks and outlines possible directions for future research. An Appendix gathers auxiliary analytical results and discrete Gronwall-type inequalities used in the proofs.



\section{Inexact information model of computation}

For $x, y \in \mathbb{R}^d$ we take 
$
\langle x, y \rangle = \sum_{k=1}^{d} x_k y_k $, $\|x\| = \langle x, x \rangle^{1/2}.$

\begin{assumption}[Noise condition]\label{ass:noise}
The perturbed (noisy) model is given by
\begin{equation}\label{noise}
\tilde{f}(t,y,z) = f(t,y,z) + \tilde{\delta}_f(t,y,z),
\end{equation}
where $\tilde{\delta}_f : [0,+\infty)\times\mathbb{R}^d\times\mathbb{R}^d\to\mathbb{R}^d$
is a Borel measurable function satisfying
\[
\| \tilde{\delta}_f(t,y,z) \| 
\le \delta (1 + \|y\|)(1 + \|z\|),
\qquad \delta \in [0,1].
\]
\end{assumption}
The multiplicative form of the perturbation is natural in information-based error models. Namely, it allows bounded relative inaccuracy that scales with the size of the arguments and preserves the polynomial growth structure of the perturbed $f$.\\

Having introduced the noise model, we now proceed to formulate the Euler approximation that incorporates such deterministic perturbations.

\vspace{0.5cm}


Let $N \in \mathbb{N}$ be fixed, and define the discretization parameters as
\[
h = \frac{\tau}{N}, 
\qquad 
t_k^j = j\tau + k h, 
\quad 
k = 0,1,\dots, N, \quad 
j = 0,1,\dots, n.
\]

The initial condition is prescribed as
\[
\tilde{y}_0^{-1} = \tilde{y}_1^{-1} = \dots = \tilde{y}_N^{-1} = \tilde{\eta},
\qquad 
\| \tilde{\eta} - \eta \| \le \delta.
\]

Then, for $j = 0,1,\dots, n$ and $k = 0,1,\dots, N-1$, the Euler scheme is defined recursively by
\[
\begin{cases}
\tilde{y}_0^j = \tilde{y}_N^{j-1}, \\[4pt]
\tilde{y}_{k+1}^j = \tilde{y}_k^{j} 
+ h\, \tilde{f}\!\left( t_k^j, \tilde{y}_k^{j}, \tilde{y}_k^{j-1} \right).
\end{cases}
\]

In what follows, we denote by $\{y_k^j\}$ the Euler values computed with the exact right-hand side $f$, while $\{\tilde y_k^j\}$ denote the Euler values computed with the perturbed right-hand side $\tilde f$ (inexact information).
The sequence $\{y_k^j\}$ is generated by the same recursion with $f$ in place of $\tilde f$ and $\eta$ in place of $\tilde\eta$.

For each $j=0,\dots,n$, let $l_N^j$ and $l_{N,\delta}^j$ denote the piecewise-linear interpolants of $\{y_k^j\}_{k=0}^N$ and $\{\tilde y_k^j\}_{k=0}^N$ on $[j\tau,(j+1)\tau]$.
We also define the global interpolants
\[
l_N(t):=l_N^j(t),\qquad l_{N,\delta}(t):=l_{N,\delta}^j(t),
\quad t\in[j\tau,(j+1)\tau],\ \ j=0,\dots,n.
\]

\noindent
The objective of this paper is to analyse the accuracy of the Euler method applied to delay differential equations with inexact information on the right-hand side. In particular, we aim to derive upper bounds for the global approximation error
\[
\max_{0 \le j \le n}\; \max_{0 \le k \le N} \bigl\| y_k^{\,j} - \tilde{y}_k^{\,j} \bigr\|,
\]
under various structural assumptions imposed on the right-hand side function~$f$ and

\[
\sup_{t \in [0,(n+1)\tau]}\;  \bigl\| z(t) - l_{N,\delta}(t)\bigr\|.
\]

Each chapter considers a different set of assumptions on~$f$, leading to corresponding error estimates and illustrating how these assumptions influence the behaviour of the Euler scheme.

\section{Analysis under the Global Lipschitz Condition}
\label{sec:lipschitz}

\subsection{Problem formulation}

We assume that $f$ in (\ref{main_eq}) satisfies
\begin{itemize}
    \item[(E1)] $f \in C([0,+\infty)\times\mathbb{R}^d\times\mathbb{R}^d;\mathbb{R}^d)$,
    \item[(E2)] there exists $L\ge 0$ such that for all $t_1,t_2\in[0,(n+1)\tau]$ and $y_1,y_2,z_1,z_2\in\mathbb{R}^d$,
    \[
      \|f(t_1,y_1,z_1)-f(t_2,y_2,z_2)\|
      \le L\big(|t_1-t_2|+\|y_1-y_2\|+\|z_1-z_2\|\big).
    \]
\end{itemize}

The condition \textnormal{(E2)} means that the function $f$ satisfies the global Lipschitz condition 
with respect to all its variables with a constant $L$. 
Note that assumptions \textnormal{(E1)}–\textnormal{(E2)} guarantee the existence and uniqueness 
of the solution $z = z(t)$ on the entire interval 
$t \in [-\tau, (n+1)\tau]$ for problem~\eqref{main_eq} 
(see Appendix, Lemma~\ref{lem3.1}, for an analogous well-posedness statement under assumptions \textnormal{(F1)--(F4)}).

Note that the right-hand side of equation~\eqref{main_eq} also satisfies 
a linear growth condition.

\begin{fact}\label{fact:linear_growth}
Let 
$f : [0,(n+1)\tau]\times\mathbb{R}^d\times\mathbb{R}^d \to \mathbb{R}^d$
satisfy assumption~\textnormal{(E2)}. 
Then there exists a constant $M \ge 0$ such that for all 
$(t,y,z)\in[0,(n+1)\tau]\times\mathbb{R}^d\times\mathbb{R}^d$,
\begin{equation}\label{eq:linear_growth}
\|f(t,y,z)\| \le M(1+\|y\|)(1+\|z\|).
\end{equation}
\end{fact}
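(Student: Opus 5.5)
The plan is to compare $f(t,y,z)$ with the fixed reference value $f(0,0,0)$ and then absorb everything into the product form. Applying \textnormal{(E2)} with $(t_1,y_1,z_1)=(t,y,z)$ and $(t_2,y_2,z_2)=(0,0,0)$ gives, for all $(t,y,z)\in[0,(n+1)\tau]\times\mathbb{R}^d\times\mathbb{R}^d$,
\[
\|f(t,y,z)\|\le \|f(0,0,0)\| + L\big(t+\|y\|+\|z\|\big)\le \|f(0,0,0)\| + L(n+1)\tau + L\|y\| + L\|z\|.
\]
The time variable ranges over the compact interval $[0,(n+1)\tau]$, so the term $L|t|$ is bounded by the constant $L(n+1)\tau$. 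This is the only place where the restriction of $t$ to a bounded interval is used.

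Next I will dominate the additive expression by the product $(1+\|y\|)(1+\|z\|)$. Since $\|y\|,\|z\|\ge 0$, we have
\[
(1+\|y\|)(1+\|z\|)=1+\|y\|+\|z\|+\|y\|\|z\|\ge 1+\|y\|+\|z\|.
\]
Hence, setting $C_0:=\|f(0,0,0)\|+L(n+1)\tau$, we get $C_0+L\|y\|+L\|z\|\le \max\{C_0,L\}(1+\|y\|+\|z\|)$. Therefore \eqref{eq:linear_growth} holds with
\[
M:=\max\bigl\{\|f(0,0,0)\|+L(n+1)\tau,\;L\bigr\}\ge 0.
\]

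No genuine obstacle is expected. One only needs $f(0,0,0)$ to be a finite vector, which is automatic because $f$ is defined pointwise; continuity \textnormal{(E1)} is not even needed. The main point of care is to note that $M$ depends on $n$ and $\tau$ through the bound on $|t|$. The product form in \eqref{eq:linear_growth} is weaker than the usual linear growth bound $K(1+\|y\|+\|z\|)$, so it follows immediately from it. It is chosen to match the form of the noise bound in Assumption~\ref{ass:noise}, which presumably lets the later estimates handle $f$ and $\tilde f$ uniformly.
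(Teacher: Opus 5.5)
Your proof is correct and follows essentially the same route as the paper: compare with $f(0,0,0)$ via \textnormal{(E2)}, bound $|t|$ by $(n+1)\tau$, and then use $1+\|y\|+\|z\|\le(1+\|y\|)(1+\|z\|)$. The only difference is the choice of constant, $M=\max\{\|f(0,0,0)\|+L(n+1)\tau,\,L\}$ in your version versus $M=\|f(0,0,0)\|+L\max\{1,(n+1)\tau\}$ in the paper, which is immaterial.
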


\begin{proof}
From the global Lipschitz condition~\textnormal{(E2)} we have
\[
\|f(t,y,z)-f(0,0,0)\|
   \le L\big(|t|+\|y\|+\|z\|\big)
   \le L\max\{1,(n+1)\tau\}\big(1+\|y\|+\|z\|\big).
\]
Consequently,
\begin{align}
\|f(t,y,z)\|
   &\le \|f(0,0,0)\| + L\max\{1,(n+1)\tau\}\big(1+\|y\|+\|z\|\big) \notag\\
   &\le \big(\|f(0,0,0)\|+L\max\{1,(n+1)\tau\}\big)(1+\|y\|+\|z\|) \notag\\
   &\le M(1+\|y\|)(1+\|z\|),
\end{align}
where we may take
\[
M := \|f(0,0,0)\| + L\max\{1,(n+1)\tau\}.
\]
\end{proof}



\subsection{Error of the Euler scheme}
\begin{lemma}\label{lem:boundedness}
Let $\tau \in (0,+\infty)$, $n \in \mathbb{N}$, $\eta \in \mathbb{R}^d$, and let 
$f$ satisfy assumptions \textnormal{(E1)}–\textnormal{(E2)}. 
There exist constants 
$\tilde{C}_0, \tilde{C}_1, \dots, \tilde{C}_n \in (0,+\infty)$, 
such that for all $N \in \mathbb{N}$ the following holds
\begin{equation}\label{eq:boundedness}
\max_{0 \le j \le n} \, \max_{0 \le k \le N} 
\| \tilde{y}_k^j \| \le \max_{0 \le j \le n} \tilde{C}_j.
\end{equation}
\end{lemma}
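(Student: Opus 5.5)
We argue by induction on the delay interval index $j$, which is the natural structure here: on the interval $[j\tau,(j+1)\tau]$ the delayed argument $\tilde y_k^{j-1}$ has already been computed and bounded, so the recursion there behaves like a noisy Euler scheme for an ODE with a bounded forcing term. The starting point is the combined growth bound for the perturbed right-hand side. By Fact~\ref{fact:linear_growth} and Assumption~\ref{ass:noise}, since $\delta\le 1$,
\[
\|\tilde f(t,y,z)\|\le (M+\delta)(1+\|y\|)(1+\|z\|)\le (M+1)(1+\|y\|)(1+\|z\|)
\]
for all $t\in[0,(n+1)\tau]$ and $y,z\in\mathbb{R}^d$. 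This bound is uniform in $\delta\in[0,1]$ and $N$, which is what makes constants independent of $N$ possible.

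\emph{Base step.} For $j=-1$ all values equal $\tilde\eta$, and $\|\tilde\eta\|\le\|\eta\|+\delta\le \|\eta\|+1=:\tilde C_{-1}$.

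\emph{Inductive step.} Suppose $\max_{0\le k\le N}\|\tilde y_k^{j-1}\|\le \tilde C_{j-1}$ with $\tilde C_{j-1}$ independent of $N$. The recursion then gives, for $k=0,\dots,N-1$,
\[
1+\|\tilde y_{k+1}^j\|\le \bigl(1+\|\tilde y_k^j\|\bigr)\bigl(1+hK_j\bigr),\qquad K_j:=(M+1)(1+\tilde C_{j-1}).
\]
Iterating $k$ up to $N$ and using $Nh=\tau$ together with $1+x\le e^x$ yields
\[
1+\|\tilde y_k^j\|\le \bigl(1+\|\tilde y_0^j\|\bigr)e^{K_j\tau}.
\]
Since $\tilde y_0^j=\tilde y_N^{j-1}$, we have $\|\tilde y_0^j\|\le\tilde C_{j-1}$. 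We may therefore set
\[
\tilde C_j:=(1+\tilde C_{j-1})e^{(M+1)(1+\tilde C_{j-1})\tau},
\]
which is independent of $N$ and $\delta$ and depends only on $\tau,\eta,M$.

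\emph{Conclusion.} Taking the maximum over $j=0,\dots,n$ gives \eqref{eq:boundedness}.

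The argument is essentially routine. The only point that needs care is the one flagged above: the growth bound must be multiplicative in $(1+\|y\|)$, so that a discrete Gronwall argument (here just the product $(1+hK_j)^N\le e^{K_j\tau}$) applies. It also must not depend on $N$ or $\delta$; this is guaranteed by $\delta\le1$ and by the fact that the number of steps per interval satisfies $Nh=\tau$. Note that the constants grow very quickly in $j$, roughly like iterated exponentials. This is harmless because $n$ is fixed.
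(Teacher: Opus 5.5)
Your proposal is correct and follows essentially the paper's own argument: induction over the delay intervals, the combined growth bound $(M+1)(1+\|y\|)(1+\|z\|)$, and a discrete Gr\"onwall step with $(1+hK_j)^N\le e^{K_j\tau}$. Your multiplicative recursion $1+\|\tilde y_{k+1}^j\|\le(1+hK_j)(1+\|\tilde y_k^j\|)$ is just the paper's affine form $\|\tilde y_{k+1}^j\|\le(1+hC_j)\|\tilde y_k^j\|+hC_j$ with $1$ added to both sides, and it yields the paper's constants up to an additive $-1$.
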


\begin{proof}
For $j=0$, the recursion reads
\[
\tilde{y}_{k+1}^0
  = \tilde{y}_k^0
  + h\,\tilde{f}\!\left(t_k^0,\tilde{y}_k^0,\tilde{\eta}\right),
\quad 
\tilde{y}_0^0=\tilde{\eta}.
\]
Since $\|\tilde{\eta}-\eta\|\le\delta\le1$, it follows that 
$\|\tilde{\eta}\|\le1+\|\eta\|$.  
Using the growth estimate 
$\|\tilde{f}(t,y,z)\|\le (M+\delta)(1+\|y\|)(1+\|z\|)$ we obtain
\[
\|\tilde{y}_{k+1}^0\|
  \le \|\tilde{y}_k^0\| 
  + h(M+1)(2+\|\eta\|)(1+\|\tilde{y}_k^0\|)
  = (1+hC_0)\|\tilde{y}_k^0\| + hC_0,
\]
where $C_0:=(M+1)(2+\|\eta\|)$.  
By the discrete Grönwall inequality,
\[
\|\tilde{y}_k^0\|
  \le (1+hC_0)^k\|\tilde{\eta}\|
     + \big((1+hC_0)^k-1\big)
  \le e^{\tau C_0}\|\tilde{\eta}\| + (e^{\tau C_0}-1)
  =: \tilde{C}_0.
\]

Assume now that for some $j\ge0$ we have
$\max_{0\le k\le N}\|\tilde{y}_k^j\|\le\tilde{C}_j$.  
Then, for $j+1$ we write
\[
\|\tilde{y}_{k+1}^{j+1}\|
  \le \|\tilde{y}_k^{j+1}\|
     + h(M+1)(1+\|\tilde{y}_k^{j+1}\|)(1+\|\tilde{y}_k^j\|) 
  \le (1+hC_j)\|\tilde{y}_k^{j+1}\| + hC_j,
\]
where $C_j := (M + 1)(1 + \tilde{C}_j)$.  
Applying the discrete Grönwall inequality again gives
\[
\|\tilde{y}_k^{j+1}\|
  \le (1+hC_j)^k\|\tilde{y}_0^{j+1}\|
     + ((1+hC_j)^k-1)
  \le e^{\tau C_j}\tilde{C}_j + (e^{\tau C_j}-1)
  =: \tilde{C}_{j+1}.
\]
Hence $\max_{0\le k\le N}\|\tilde{y}_k^{j+1}\|\le\tilde{C}_{j+1}$ for all $j$, 
which completes the proof.
\end{proof}

\begin{lemma}\label{thm:stability}
Let $\tau \in (0,+\infty)$, $n \in \mathbb{N}$, $\eta \in \mathbb{R}^d$, $\delta \in [0,1]$ and let 
$f$ satisfy assumptions \textnormal{(E1)}–\textnormal{(E2)}. Set $e_k^j := y_k^j - \tilde{y}_k^j$.
Then there exist constants $\bar K_0,\dots,\bar K_n>0$, such that for all $N\in\mathbb{N}$ the following holds
\begin{equation}\label{eq:error_bound}
\max_{0\le j\le n}\, \max_{0\le k\le N}\|e_k^j\|
   \le \max_{0\le j\le n}\bar K_j\,\delta.
\end{equation}
\end{lemma}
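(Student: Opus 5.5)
We argue by induction on the delay interval index $j$, in the same way as in the proof of Lemma~\ref{lem:boundedness}. Subtracting the two recursions gives
\[
e_{k+1}^j = e_k^j + h\bigl(f(t_k^j,y_k^j,y_k^{j-1}) - f(t_k^j,\tilde y_k^j,\tilde y_k^{j-1})\bigr) - h\,\tilde\delta_f(t_k^j,\tilde y_k^j,\tilde y_k^{j-1}),
\]
with $e_0^j=e_N^{j-1}$ and $e_k^{-1}=\eta-\tilde\eta$, so that $\|e_k^{-1}\|\le\delta$. By (E2) the middle term is bounded by $L(\|e_k^j\|+\|e_k^{j-1}\|)$; the time arguments coincide and contribute nothing. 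By Assumption~\ref{ass:noise} together with Lemma~\ref{lem:boundedness}, the noise term is bounded by $\delta(1+\tilde C_j)(1+\tilde C_{j-1})$, where we set $\tilde C_{-1}:=1+\|\eta\|$. It is important that Lemma~\ref{lem:boundedness} bounds the \emph{perturbed} iterates uniformly in $N$ and $\delta\in[0,1]$, since the noise is evaluated at the perturbed arguments. Combining these bounds yields
\[
\|e_{k+1}^j\|\le(1+hL)\|e_k^j\| + hL\|e_k^{j-1}\| + h\,\delta\,D_j,\qquad D_j:=(1+\tilde C_j)(1+\tilde C_{j-1}).
\]

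For $j=0$ we have $\|e_k^{-1}\|\le\delta$ and $\|e_0^0\|\le\delta$. The discrete Grönwall inequality then gives
\[
\|e_k^0\|\le e^{L\tau}\delta + \bigl(e^{L\tau}-1\bigr)\delta\,(L+D_0)/L=:\bar K_0\delta.
\]
If $L=0$, we replace $(e^{L\tau}-1)/L$ by $\tau$.

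For the inductive step, assume $\max_k\|e_k^{j}\|\le\bar K_{j}\delta$. The recursion on interval $j+1$ has forcing term $h(L\bar K_j+D_{j+1})\delta$ and initial value $\|e_0^{j+1}\|=\|e_N^j\|\le\bar K_j\delta$. Grönwall gives $\max_k\|e_k^{j+1}\|\le\bar K_{j+1}\delta$ with
\[
\bar K_{j+1}:=e^{L\tau}\bar K_j+\tau e^{L\tau}(L\bar K_j+D_{j+1}),
\]
which is independent of $N$ and $\delta$. Taking the maximum over $j=0,\dots,n$ gives \eqref{eq:error_bound}.

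The only delicate point is controlling the noise term uniformly. This requires the a priori bound on $\tilde y_k^j$ from Lemma~\ref{lem:boundedness}, together with the bound $\|\tilde\eta\|\le 1+\|\eta\|$ for the initial segment. The rest is routine Grönwall bookkeeping, using $(1+hL)^k\le e^{L\tau}$ for $k\le N$.
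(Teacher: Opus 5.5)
Your proof is correct and follows essentially the same route as the paper. You derive the same recursion $\|e_{k+1}^j\|\le(1+hL)\|e_k^j\|+hL\|e_k^{j-1}\|+h\delta(1+\tilde C_j)(1+\tilde C_{j-1})$ and close it layer by layer with the discrete Gr\"onwall inequality and the a priori bounds of Lemma~\ref{lem:boundedness}; your forcing constant $L\bar K_j+D_{j+1}$ is exactly the paper's $\hat K_j$, and the only differences are cosmetic (you state $\|e_0^0\|\le\delta$ explicitly and use the cruder bound $\frac{(1+hL)^k-1}{L}\le\tau e^{L\tau}$, which also covers $L=0$ without a separate limit argument).
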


\begin{proof}
Define the errors $e_k^j=y_k^j-\tilde y_k^j$.  
From the recursive relations for $y_k^j$ and $\tilde y_k^j$, we obtain
\[
e_{k+1}^j
   = e_k^j + h\!\left[
   f(t_k^j,y_k^j,y_k^{j-1})
   - f(t_k^j,\tilde y_k^j,\tilde y_k^{j-1})
   -\tilde\delta_f(t_k^j,\tilde y_k^j,\tilde y_k^{j-1})
   \right].
\]
Using the global Lipschitz property of $f$ and the noise bound, one gets
\[
\|e_{k+1}^j\|
   \le (1+hL)\|e_k^j\|
       + hL\|e_k^{j-1}\|
       + h\delta(1+\|\tilde y_k^j\|)(1+\|\tilde y_k^{j-1}\|).
\]

From Lemma~\ref{lem:boundedness} we have uniform bounds 
$\|\tilde y_k^j\|\le\tilde C_j$, independent of $N$.  

For $j=0$, using $e_k^{-1}=\eta-\tilde\eta$ and $\|e_k^{-1}\|\le\delta$, we get
\[
\|e_{k+1}^0\|
\le (1+hL)\|e_k^0\| + hL\delta
   + h\delta(1+\|\tilde y_k^0\|)(1+\|\tilde\eta\|).
\]
By Lemma~\ref{lem:boundedness}, $\max_{0\le k\le N}\|\tilde y_k^0\|\le \tilde C_0$, and
$\|\tilde\eta\|\le 1+\|\eta\|$, hence
\[
\|e_{k+1}^0\|
\le (1+hL)\|e_k^0\| + h\delta\,\hat K_0,
\qquad
\hat K_0 := L + (1+\tilde C_0)(2+\|\eta\|).
\]
Applying the discrete Gr\"onwall inequality yields
\[
\max_{0\le k\le N}\|e_k^0\|
\le \bar K_0\delta
\]
for some $\bar K_0>0$ independent of $N$.

Assume inductively that for some $j\ge 0$,
\[
\max_{0\le k\le N}\|e_k^j\|\le \bar K_j\delta.
\]
Then, for layer $j+1$,
\[
\|e_{k+1}^{j+1}\|
\le (1+hL)\|e_k^{j+1}\| + hL\|e_k^j\|
   + h\delta(1+\|\tilde y_k^{j+1}\|)(1+\|\tilde y_k^j\|).
\]
Using Lemma~\ref{lem:boundedness} and the induction hypothesis,
\[
\|e_{k+1}^{j+1}\|
\le (1+hL)\|e_k^{j+1}\| + h\delta\,\hat K_j,
\]
where
\[
\hat K_j := L\bar K_j + (1+\tilde C_{j+1})(1+\tilde C_j).
\]
Since $e_0^{j+1}=e_N^j$, we have $\|e_0^{j+1}\|\le \bar K_j\delta$.
Applying the discrete Grönwall inequality (and in the case $L=0$ interpreting $\frac{e^{\tau L}-1}{L}$ as its limit equal to $\tau$) yields

\[
\max_{0\le k\le N}\|e_k^{j+1}\|
\le \left(e^{\tau L}\bar K_j + \frac{e^{\tau L}-1}{L}\hat K_j\right)\delta
=: \bar K_{j+1}\delta.
\]

Hence, by induction on $j$, inequality~\eqref{eq:error_bound} follows.
\end{proof}

\begin{remark}\label{rem:stability_constants}
It follows from the proofs of Lemma~\ref{thm:stability} and Lemma~\ref{lem:boundedness} 
that all constants $\tilde C_j$, $\bar K_j$, and $\hat K_j$ depend only on the model parameters 
$\tau$, $n$, $d$, $L$, and $\eta$. 
Consequently, the stability estimate~\eqref{eq:error_bound} holds uniformly with respect to the time-step size $h=\tau/N$. 
In particular, the Euler scheme remains uniformly stable with respect to perturbations as $N\to\infty$. 
Combined with the deterministic discretization error estimates under assumptions~\textnormal{(E1)}--\textnormal{(E2)}, 
this yields convergence of the perturbed Euler approximations to the exact solution as $h\to 0$ and $\delta\to 0$.
\end{remark}

\begin{theorem}\label{thm:lipschitz_global_error}
Let $\tau \in (0,+\infty)$, $n \in \mathbb{N}$, $\eta \in \mathbb{R}^d$, $\delta \in [0,1]$, and let 
$f$ satisfy assumptions \textnormal{(E1)}--\textnormal{(E2)}.
Then there exists a constant $C>0$, independent of $N$, such that
\begin{equation}\label{eq:global_error_noisy}
\sup_{t \in [0,(n+1)\tau]}
\| z(t) - l_{N, \delta}(t) \|
\le C (h+\delta).
\end{equation}
Here $z$ denotes the exact solution of~\eqref{main_eq}, and $l_{N, \delta}$ is the global
piecewise-linear interpolant obtained by combining local interpolants of the noisy Euler iterates
$\{\tilde y_k^j\}_{k=0}^N$ on subintervals $[j\tau,(j+1)\tau]$, $j=0,\dots,n$.
\end{theorem}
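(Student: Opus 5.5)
We split the error with the triangle inequality as
\[
\|z(t)-l_{N,\delta}(t)\|\le \|z(t)-l_N(t)\|+\|l_N(t)-l_{N,\delta}(t)\|,
\qquad t\in[0,(n+1)\tau],
\]
and bound the two terms separately.

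\textbf{The noise term.} On each subinterval $[t_k^j,t_{k+1}^j]$ the difference $l_N-l_{N,\delta}$ is the linear interpolant of $e_k^j$ and $e_{k+1}^j$. Hence its norm is at most $\max(\|e_k^j\|,\|e_{k+1}^j\|)$. Lemma~\ref{thm:stability} then gives $\sup_t\|l_N(t)-l_{N,\delta}(t)\|\le \max_j\bar K_j\,\delta$, uniformly in $N$. By Remark~\ref{rem:stability_constants} this constant depends only on the model parameters.

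\textbf{The discretization term.} We prove the classical first-order estimate $\sup_t\|z(t)-l_N(t)\|\le C'h$ for the exact-information Euler scheme.

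First, $z$ is bounded on $[-\tau,(n+1)\tau]$. This follows from Fact~\ref{fact:linear_growth} and Gr\"onwall's inequality, applied interval by interval in $j$. Consequently $z'$ is bounded by some $M_1$, so $z$ is Lipschitz. Since $f$ is Lipschitz in all variables by (E2), $t\mapsto f(t,z(t),z(t-\tau))=z'(t)$ is Lipschitz on each $[j\tau,(j+1)\tau]$ with some constant $M_2$. At $t=j\tau$ it is only piecewise Lipschitz, because $z(t-\tau)$ is Lipschitz but $z'$ may jump at $0$. Piecewise is enough, since grid points are aligned with $j\tau$.

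Next, set $\varepsilon_k^j:=z(t_k^j)-y_k^j$ and $\varepsilon_k^{-1}=0$. We write
\[
z(t_{k+1}^j)=z(t_k^j)+h f\bigl(t_k^j,z(t_k^j),z(t_k^{j-1})\bigr)+r_k^j,
\qquad
r_k^j=\int_{t_k^j}^{t_{k+1}^j}\bigl(z'(s)-z'(t_k^j)\bigr)\,ds,
\]
so that $\|r_k^j\|\le M_2h^2/2$. Subtracting the scheme and using (E2) gives
\[
\|\varepsilon_{k+1}^j\|\le(1+hL)\|\varepsilon_k^j\|+hL\|\varepsilon_k^{j-1}\|+M_2h^2 .
\]
This has the same structure as in the proof of Lemma~\ref{thm:stability}, with $\delta$ replaced by $h$. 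The same induction over $j$ with the discrete Gr\"onwall inequality and $\varepsilon_0^{j+1}=\varepsilon_N^j$ yields $\max_{j,k}\|\varepsilon_k^j\|\le K'h$.

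Finally, for $t\in[t_k^j,t_{k+1}^j]$ we bound $\|z(t)-l_N(t)\|$ by
\[
\|z(t)-z(t_k^j)\|+\|z(t_k^j)-y_k^j\|+\|y_k^j-l_N(t)\|.
\]
The first term is at most $M_1h$ and the second at most $K'h$. The third is at most $\|y_{k+1}^j-y_k^j\|=h\|f(t_k^j,y_k^j,y_k^{j-1})\|$, which is $O(h)$ by Fact~\ref{fact:linear_growth} and boundedness of $y_k^j$ (Lemma~\ref{lem:boundedness} with $\delta=0$).

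Combining the two terms gives \eqref{eq:global_error_noisy} with $C=\max(C',\max_j\bar K_j)$.

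The main obstacle is the local truncation estimate: justifying the Lipschitz regularity of $z'$ on each delay interval, uniformly and with constants independent of $N$. It rests on the a priori bound on $z$ and on the grid being aligned with the breaking points $j\tau$. Once $\|r_k^j\|=O(h^2)$ is established, everything else mirrors the already-proved stability argument.
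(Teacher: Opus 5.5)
Your proof is correct and uses the same decomposition as the paper, $z-l_{N,\delta}=(z-l_N)+(l_N-l_{N,\delta})$, and it handles the noise term exactly as the paper does, via Lemma~\ref{thm:stability} and the convex-combination form of the interpolant. The one difference is that the paper cites Theorem~2.5 of \cite{nj_phd} for the first-order noiseless bound, whereas you prove it directly from the $O(h^2)$ local truncation error and the same layer-by-layer Gr\"onwall induction, which makes the argument self-contained. Incidentally, $z'$ is Lipschitz on all of $[0,(n+1)\tau]$, because $z$ is Lipschitz on $[-\tau,(n+1)\tau]$; the jump of $z'$ at $0$ only affects $z''$ at $\tau$, so your ``piecewise'' caveat is unnecessary, though harmless.
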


\begin{proof}
For each $j=0,1,\dots,n$ we introduce the noiseless Euler approximation
$\{y_k^j\}_{k=0}^N$ corresponding to the exact right-hand side $f$ and the
exact initial value $\eta$, and denote by $l_{N,0}^{\,j}$ the piecewise linear
interpolation of $\{y_k^j\}_{k=0}^N$ on $[j\tau,(j+1)\tau]$. Then
\[
z(t) - l_{N, \delta}^{\,j}(t)
= \bigl(z(t) - l_{N,0}^{\,j}(t)\bigr)
 + \bigl(l_{N,0}^{\,j}(t) - l_{N, \delta}^{\,j}(t)\bigr),
\qquad t\in[j\tau,(j+1)\tau].
\]

Under assumptions~\textnormal{(E1)}–\textnormal{(E2)}, the standard error
analysis for the Euler method applied to delay differential equations yields a
first-order convergence estimate on each subinterval $[j\tau,(j+1)\tau]$.
More precisely, by Theorem~2.5 in~\cite{nj_phd} (applied to the present
setting) there exists a constant $C_j^{(1)}\ge0$, independent of $N$, such that
\begin{equation}\label{eq:deterministic_part_paper}
\sup_{t\in[j\tau,(j+1)\tau]}
\bigl\| z(t) - l_{N,0}^{\,j}(t) \bigr\|
\le C_j^{(1)}\, h .
\end{equation}

Next, we estimate the difference between the noiseless and noisy Euler
approximations. Let $e_k^j := y_k^j - \tilde y_k^j$ for
$k=0,1,\dots,N$ and $j=0,1,\dots,n$. By Lemma~\ref{thm:stability}, there exist
constants $\bar K_0,\dots,\bar K_n\ge0$, such that
\begin{equation}\label{eq:stability_grid_paper}
\max_{0\le k\le N} \|e_k^j\|
\le \bar K_j\,\delta,
\qquad j=0,1,\dots,n.
\end{equation}
For any $t\in[t_k^j,t_{k+1}^j]$ we can write
\[
l_{N,0}^{\,j}(t)
= (1-\theta)\, y_k^j + \theta\, y_{k+1}^j,
\qquad
l_{N, \delta}^{\,j}(t)
= (1-\theta)\, \tilde y_k^j + \theta\, \tilde y_{k+1}^j
\]
for some $\theta = \theta(t)\in[0,1]$, and hence
\[
l_{N,0}^{\,j}(t) - l_{N, \delta}^{\,j}(t)
= (1-\theta)\, e_k^j + \theta\, e_{k+1}^j.
\]
Therefore,
\[
\bigl\| l_{N,0}^{\,j}(t) - l_{N, \delta}^{\,j}(t) \bigr\|
\le (1-\theta)\|e_k^j\| + \theta\|e_{k+1}^j\|
\le \max_{0\le m\le N}\|e_m^j\|,
\]
and by \eqref{eq:stability_grid_paper} we obtain
\begin{equation}\label{eq:stability_interp_paper}
\sup_{t\in[j\tau,(j+1)\tau]}
\bigl\| l_{N,0}^{\,j}(t) - l_{N, \delta}^{\,j}(t) \bigr\|
\le \bar K_j\,\delta.
\end{equation}

Combining \eqref{eq:deterministic_part_paper} and
\eqref{eq:stability_interp_paper} with the triangle inequality yields
\[
\sup_{t\in[j\tau,(j+1)\tau]}
\bigl\| z(t) - l_{N, \delta}^{\,j}(t) \bigr\|
\le C_j^{(1)} h + \bar K_j \delta
\le C_j (h+\delta),
\]
where we may take $C_j := C_j^{(1)} + \bar K_j$, $j=0,1,\dots,n$.
Taking the maximum over $j=0,\dots,n$ gives~\eqref{eq:global_error_noisy}.
\end{proof}

\section{Analysis under a Local One-Sided Lipschitz and Local Hölder Condition}
\label{sec:onesidedlipschitz}

\subsection{Problem formulation}
Let the right-hand side  
\(f \colon [0,+\infty)\times\mathbb{R}^{d}\times\mathbb{R}^{d}
      \to\mathbb{R}^{d}\)
in equation~\eqref{main_eq} satisfy the following conditions:

\begin{enumerate}
\item   [(F1)]
      \[
      f \in C\!\bigl([0,+\infty)\times\mathbb{R}^{d}\times
      \mathbb{R}^{d};\,\mathbb{R}^{d}\bigr).
      \]

\item [(F2)]
      There exists a constant \(K>0\) such that for every  
      \((t,y,z)\in[0,+\infty)\times\mathbb{R}^{d}\times\mathbb{R}^{d}\)
      \[
      \|f(t,y,z)\|\;\le\;
      K\,(1+\|y\|)\,(1+\|z\|).
      \]

\item [(F3)]
      There exists a constant \(H\in\mathbb{R}\) such that for all  
      \((t,z)\in[0,+\infty)\times\mathbb{R}^{d}\) and
      \(y_{1},y_{2}\in\mathbb{R}^{d}\)
      \[
      \bigl\langle y_{1}-y_{2},\,
      f(t,y_{1},z)-f(t,y_{2},z)\bigr\rangle
      \;\le\;
      H\,(1+\|z\|)\,\|y_{1}-y_{2}\|^{2}.
      \]

\item [(F4)]
      There exist constants \(L>0\) and
      exponents \(\alpha,\beta_{1},\beta_{2},\gamma\in(0,1]\) such that
      for all \(t_{1},t_{2}\in[0,+\infty)\) and
      \(y_{1},y_{2},z_{1},z_{2}\in\mathbb{R}^{d}\)
      \[
      \begin{aligned}
      \|f(t_{1},y_{1},z_{1})-f(t_{2},y_{2},z_{2})\|
      \;\le\;
      L\Bigl(&
      (1+\|y_{1}\|+\|y_{2}\|)\,
      (1+\|z_{1}\|+\|z_{2}\|)\,
      |t_{1}-t_{2}|^{\alpha}\\
      &\!+\,
      (1+\|z_{1}\|+\|z_{2}\|)\,
      \|y_{1}-y_{2}\|^{\beta_{1}}\\
      &\!+\,
      (1+\|z_{1}\|+\|z_{2}\|)\,
      \|y_{1}-y_{2}\|^{\beta_{2}}\\
      &\!+\,
      (1+\|y_{1}\|+\|y_{2}\|)\,
      \|z_{1}-z_{2}\|^{\gamma}\Bigr).
      \end{aligned}
      \]
\end{enumerate}
Assumption~\textnormal{(F2)} provides a \emph{global linear-growth} bound for the right-hand side function $f$, while the usual global Lipschitz requirement is replaced by the \emph{one-sided} condition~\textnormal{(F3)}. Condition~\textnormal{(F4)} is commonly referred to as a \emph{local H\"older} condition. As noted in~\cite{nj_phd}, these assumptions (especially \textnormal{(F3)} and \textnormal{(F4)}) are motivated by a real-life model describing the evolution of dislocation density. Under assumptions \textnormal{(F1)-(F4)}, problem~\eqref{main_eq} admits a unique solution \(z=z(t)\) on the whole interval (see Lemma~\ref{lem3.1}).%
%
%
%
\begin{remark}
If \(f\) satisfies assumptions~\textnormal{(E1)}--\textnormal{(E2)}, then it also satisfies~\textnormal{(F1)}--\textnormal{(F4)} with \(\alpha=\beta_{1}=\beta_{2}=\gamma=1\). We treat these two settings separately because the stronger assumptions~\textnormal{(E1)}--\textnormal{(E2)} lead to sharper convergence rates for the Euler scheme.
\end{remark}

\subsection{Error of the Euler scheme}

%
\begin{lemma}\label{lem:boundedness_onesided}
Let $\tau \in (0,+\infty)$, $n \in \mathbb{N}$, $\eta \in \mathbb{R}^d$, $\delta \in [0,1]$ and let 
$f$ satisfy assumptions \textnormal{(F1)}–\textnormal{(F2)} 
Then there exist constants 
$\tilde{C}_0, \dots, \tilde{C}_n > 0$ such that for all $N \in \mathbb{N}$
\[
\max_{0 \le j \le n} \max_{0 \le k \le N} 
\|\tilde{y}_k^j\| \le \max_{0 \le j \le n} \tilde{C}_j.
\]
\end{lemma}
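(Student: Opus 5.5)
The plan is to repeat the argument of Lemma~\ref{lem:boundedness} almost verbatim. Its only use of (E2) was to obtain the multiplicative growth bound of Fact~\ref{fact:linear_growth}, and (F2) gives that bound directly with $M$ replaced by $K$. Combining (F2) with Assumption~\ref{ass:noise} and $\delta\le 1$, we get
\[
\|\tilde f(t,y,z)\|\le (K+\delta)(1+\|y\|)(1+\|z\|)\le (K+1)(1+\|y\|)(1+\|z\|)
\]
for all $(t,y,z)$. This is the only property of $f$ that will be used. Neither the one-sided condition (F3) nor the H\"older condition (F4) is needed for boundedness.

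We proceed by induction over the delay intervals $j=0,1,\dots,n$. On the initial layer $j=-1$ all values equal $\tilde\eta$, and $\|\tilde\eta\|\le 1+\|\eta\|$ since $\|\tilde\eta-\eta\|\le\delta\le 1$; set $\tilde C_{-1}:=1+\|\eta\|$. Suppose that $\max_k\|\tilde y_k^{j-1}\|\le \tilde C_{j-1}$. The recursion $\tilde y_{k+1}^j=\tilde y_k^j+h\tilde f(t_k^j,\tilde y_k^j,\tilde y_k^{j-1})$ and the growth bound then give
\[
\|\tilde y_{k+1}^j\|\le (1+hC_j')\|\tilde y_k^j\|+hC_j',\qquad C_j':=(K+1)(1+\tilde C_{j-1}).
\]
The discrete Gr\"onwall inequality, together with $kh\le\tau$, yields
\[
\|\tilde y_k^j\|\le e^{\tau C_j'}\|\tilde y_0^j\|+e^{\tau C_j'}-1.
\]
Using $\tilde y_0^j=\tilde y_N^{j-1}$ (or $\tilde y_0^0=\tilde\eta$ when $j=0$), we obtain
\[
\|\tilde y_k^j\|\le e^{\tau C_j'}\tilde C_{j-1}+e^{\tau C_j'}-1=:\tilde C_j.
\]
This constant depends only on $K,\tau,\eta,j$, and not on $N$ or $\delta$. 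Taking the maximum over $j$ completes the proof.

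The only point requiring care, and the only step I would check closely, is that the constants stay independent of $N$. This holds because $(1+hC)^k\le e^{khC}\le e^{\tau C}$ for $k\le N$. It also holds uniformly in $\delta\in[0,1]$, which is why we bound $K+\delta$ by $K+1$ and $\|\tilde\eta\|$ by $1+\|\eta\|$ at the outset. Everything else is the same routine inequality as in the Lipschitz case.
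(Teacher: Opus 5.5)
Your proof is correct and follows essentially the same route as the paper. Both arguments run a layer-by-layer induction over $j$, use the growth bound $\|\tilde f(t,y,z)\|\le (K+\delta)(1+\|y\|)(1+\|z\|)$ from (F2) and Assumption~\ref{ass:noise}, and apply the discrete Gr\"onwall inequality with $(1+hC)^k\le e^{\tau C}$. The only difference is cosmetic: you bound $K+\delta$ by $K+1$ and $\|\tilde\eta\|$ by $1+\|\eta\|$ from the outset, so your constants $\tilde C_j$ are visibly uniform in $\delta\in[0,1]$ and in $\tilde\eta$, whereas the paper's constants formally carry $\delta$ and $\|\tilde\eta\|$.
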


\begin{proof}
For $j=0$, the recursion reads
\[
\tilde{y}_{k+1}^0
  = \tilde{y}_k^0
  + h\,\tilde{f}(t_k^0,\tilde{y}_k^0,\tilde{\eta}),
\qquad 
\tilde{y}_0^0 = \tilde{\eta}, \quad \|\tilde{\eta}-\eta\| \le \delta \le 1.
\]
From (F2) and Assumption~\ref{ass:noise} we have
\[
\|\tilde{f}(t,y,z)\|
 = \|f(t,y,z) + \tilde{\delta}_f(t,y,z)\|
 \le (K + \delta)(1 + \|y\|)(1 + \|z\|).
\]
Hence,
\[
\|\tilde{y}_{k+1}^0\|
 \le \|\tilde{y}_k^0\|
   + h (K + \delta)(1 + \|\tilde{y}_k^0\|)(1 + \|\tilde{\eta}\|)
 \le (1 + h C_0)\|\tilde{y}_k^0\| + h C_0,
\]
where $C_0 := (K + \delta)(2 + \|\eta\|)$.  
By the discrete Grönwall inequality,
\[
\|\tilde{y}_k^0\|
 \le e^{\tau C_0}\|\tilde{\eta}\| + (e^{\tau C_0} - 1)
 =: \tilde{C}_0.
\]

Assume now that for some $j \ge 0$ we have
$\max_{0 \le k \le N} \|\tilde{y}_k^{j}\| \le \tilde{C}_j$.
Then
\[
\|\tilde{y}_{k+1}^{\,j+1}\|
 \le \|\tilde{y}_k^{\,j+1}\|
   + h (K + \delta)(1 + \|\tilde{y}_k^{\,j+1}\|)(1 + \|\tilde{y}_k^{\,j}\|)
 \le (1 + h C_j)\|\tilde{y}_k^{\,j+1}\| + h C_j,
\]
where $C_j := (K + \delta)(1 + \tilde{C}_j)$.  The discrete Gr\"onwall inequality yields again
\[
\|\tilde{y}_k^{\,j+1}\|
 \le e^{\tau C_j}\tilde{C}_j + (e^{\tau C_j} - 1)
 =: \tilde{C}_{j+1}.
\]
Induction in $j$ completes the proof and gives
\[
\max_{0 \le j \le n} \max_{0 \le k \le N} \|\tilde{y}_k^j\|
 \le \max_{0 \le j \le n} \tilde{C}_j,
\]
\end{proof}

\vspace{0.5cm}

\begin{lemma}\label{thm:euler_error}
Let $\tau \in (0,+\infty)$, $n \in \mathbb{N}$, $\eta \in \mathbb{R}^d$ and $\delta \in (0,1]$. Let 
$f$ satisfy assumptions \textnormal{(F1)}–\textnormal{(F4)} and set $e_k^j := y_k^j - \tilde{y}_k^j$.

\begin{enumerate}[label=(\Alph*)]
\item If $\gamma = 1$, there exist constants $C_0, C_1, \dots, C_n > 0$ such that for all $N \ge 2\lceil \tau \rceil$ and each $j = 0,1,\dots,n$,
\[
\max_{0 \le k \le N} \| e_k^{\,j} \|
\le
C_j\bigl(h^{1/2}+\delta\bigr).
\]

\item If $\gamma \in (0,1)$, then there exist constants $C_0, C_1, \dots, C_n > 0$ such that for all $N \ge 2\lceil \tau \rceil$ 
\[
\max_{0 \le k \le N} \| e_k^{\,0} \|
\;\le\;
C_0\bigl(h^{1/2} + \delta^{\gamma}\bigr),
\]
and for each $j = 0, 1, \dots, n-1$,
\[
\max_{0 \le k \le N} \| e_k^{\,j+1} \|
\;\le\;
C_{j+1}\!\left(
\sum_{\ell = 0}^{j+1} h^{\frac{\gamma^{\ell}}{2}}
\;+\;
\sum_{\ell = 0}^{j+2} \delta^{\gamma^{\ell}}
\right)\!.
\]
\end{enumerate}
\end{lemma}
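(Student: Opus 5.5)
The plan is to estimate $\|e_k^j\|^2$ by an energy argument rather than the norm directly, because (F3) only controls the inner product. From the two recursions,
\[
e_{k+1}^j = e_k^j + h\bigl[f(t_k^j,y_k^j,y_k^{j-1})-f(t_k^j,\tilde y_k^j,y_k^{j-1})\bigr] + h\bigl[f(t_k^j,\tilde y_k^j,y_k^{j-1})-f(t_k^j,\tilde y_k^j,\tilde y_k^{j-1})\bigr] - h\,\tilde\delta_f(t_k^j,\tilde y_k^j,\tilde y_k^{j-1}).
\]
Squaring the norm gives $\|e_{k+1}^j\|^2 = \|e_k^j\|^2 + 2h\langle e_k^j,\cdot\rangle + h^2\|\cdot\|^2$. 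The first bracket paired with $e_k^j$ is controlled by (F3): it is bounded by $2hH(1+\|y_k^{j-1}\|)\|e_k^j\|^2$. The second bracket is bounded by (F4) by $L(1+\|\tilde y_k^j\|+\|\tilde y_k^j\|)\|e_k^{j-1}\|^{\gamma}$. The noise term is at most $\delta(1+\tilde C_j)(1+\tilde C_{j-1})$. The $h^2$ term is $O(h^2)$ because all arguments stay uniformly bounded. For this I use Lemma~\ref{lem:boundedness_onesided} for the noisy iterates and its $\delta=0$ analogue for $\{y_k^j\}$, together with (F2).

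The cross terms $2h\langle e_k^j, A\rangle$ with $A$ being the delay difference or the noise are handled by Young's inequality, $2h\|e_k^j\|\|A\|\le h\|e_k^j\|^2 + h\|A\|^2$. This yields
\[
\|e_{k+1}^j\|^2 \le (1+c h)\|e_k^j\|^2 + c h\bigl(\|e_k^{j-1}\|^{2\gamma} + \delta^2\bigr) + c h^2 .
\]
The discrete Gronwall inequality, with $e_0^{j}=e_N^{j-1}$, then gives
\[
\max_k\|e_k^j\|^2 \le C\bigl(\|e_N^{j-1}\|^2 + \max_k\|e_k^{j-1}\|^{2\gamma} + \delta^2 + h\bigr).
\]
Taking square roots explains the $h^{1/2}$: it comes from the crude $h^2$ term, which the non-Lipschitz (Hölder) structure does not let me cancel. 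The restriction $N\ge 2\lceil\tau\rceil$ (so $h\le 1/2$) is used to keep factors like $(1+ch)$ and $h^2\le h$ uniformly harmless.

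For the base layer $j=0$, $e_k^{-1}=\eta-\tilde\eta$ has norm at most $\delta$. The delay term therefore contributes $\delta^{\gamma}$, so $\|e^0\|\le C_0(h^{1/2}+\delta+\delta^{\gamma})$. This is $C_0(h^{1/2}+\delta)$ when $\gamma=1$ and $C_0(h^{1/2}+\delta^\gamma)$ when $\gamma<1$, since $\delta\le\delta^\gamma$. The initial offset $\|e_0^0\|\le\delta$ is absorbed in the same way.

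For the induction step in case (A), the inductive bound $C_j(h^{1/2}+\delta)$ is reproduced immediately. In case (B), I insert the hypothesis $\|e^j\|\le C_j(\sum_{\ell\le j}h^{\gamma^\ell/2}+\sum_{\ell\le j+1}\delta^{\gamma^\ell})$ into $\|e^j\|^\gamma$. Subadditivity $(\sum a_i)^\gamma\le\sum a_i^\gamma$ turns this into $\sum_{\ell\le j}h^{\gamma^{\ell+1}/2}+\sum_{\ell\le j+1}\delta^{\gamma^{\ell+1}}$, i.e. indices shifted by one. Adding the non-shifted terms $h^{1/2}$, $\delta$ and $\|e_N^j\|$ gives exactly the sums up to $j+1$ in $h$ and up to $j+2$ in $\delta$.

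The main obstacle is bookkeeping. The delayed error enters through the Hölder power $\gamma$ instead of linearly, so the induction must be phrased on the unsquared maxima. The Young splitting must keep the coefficient of $\|e_k^j\|^2$ of order $h$ while leaving $\|e^{j-1}\|^{\gamma}$ rather than its square root; bounding $\|A\|^2$ with $\|A\|\le c\|e^{j-1}\|^\gamma$ is the right choice. One must also check that (F3) is applied with the same delayed argument in both terms, which is why the decomposition inserts the intermediate value $f(t_k^j,\tilde y_k^j,y_k^{j-1})$.
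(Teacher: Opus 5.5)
Your proposal is correct and follows essentially the same route as the paper. It uses the same splitting through the intermediate value $f(t_k^j,\tilde y_k^j,y_k^{j-1})$, then (F3) for the $y$-difference, and the H\"older bound in the delayed argument together with the noise bound, both handled by Young's inequality. The discrete Gr\"onwall inequality then gives $E_j^2\le C_j(E_{j-1}^2+h+\delta^2+E_{j-1}^{2\gamma})$, followed by induction on the unsquared maxima using subadditivity of $x\mapsto x^\gamma$. The only difference is cosmetic. The paper rewrites the step as $e_{k+1}^j-hL_k^j=e_k^j+hR_k^j$ and applies Young against $e_{k+1}^j$, which forces a division by $1-h$ and the use of $h\le 1/2$. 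You instead expand $\|e_{k+1}^j\|^2$ directly and apply Young against $e_k^j$, which avoids that division and is slightly simpler.
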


\begin{proof}
Fix $N\in\mathbb{N}$ and set $h=\tau/N$. Assume $N\ge 2\lceil\tau\rceil$ so that $h\le 1/2$.
For $k=0,1,\dots,N-1$ and $j=0,1,\dots,n$ define
\[
U_k^j=(t_k^j,\,y_k^j,\,y_k^{j-1}), \qquad
\tilde U_k^j=(t_k^j,\,\tilde y_k^j,\,\tilde y_k^{j-1}).
\]
Subtracting the Euler recursions and using $\tilde f=f+\tilde\delta_f$ we obtain
\[
e_{k+1}^j
= e_k^j + h\Big(f(U_k^j)-f(\tilde U_k^j)\Big)
- h\,\tilde\delta_f(\tilde U_k^j).
\]
Add and subtract $f(t_k^j,\tilde y_k^j,y_k^{j-1})$ and introduce
\[
R_k^j := f(t_k^j,y_k^j,y_k^{j-1})-f(t_k^j,\tilde y_k^j,y_k^{j-1}),
\]
\[
L_k^j := f(t_k^j,\tilde y_k^j,y_k^{j-1})-f(t_k^j,\tilde y_k^j,\tilde y_k^{j-1})
\;-\;\tilde\delta_f(\tilde U_k^j).
\]
Then
\begin{equation}\label{eq:basic-identity}
e_{k+1}^j - h\,L_k^j = e_k^j + h\,R_k^j.
\end{equation}

and 
\[
\|e_{k+1}^j-hL_k^j\|^2=\|e_k^j+hR_k^j\|^2.
\]
Hence
\[
\|e_{k+1}^j\|^2 -2h\langle e_{k+1}^j,L_k^j\rangle
\le
\|e_k^j\|^2 +2h\langle e_k^j,R_k^j\rangle + h^2\|R_k^j\|^2.
\]
Using Cauchy--Schwarz and Young's inequalities we have
\[
2h\langle e_{k+1}^j, L_k^j\rangle
\le 2h|\langle e_{k+1}^j, L_k^j\rangle|
\le 2h\|e_{k+1}^j\|\,\|L_k^j\|
\le h\|e_{k+1}^j\|^2 + h\|L_k^j\|^2.
\]
Hence,
\[
(1-h)\|e_{k+1}^j\|^2
\le
\|e_k^j\|^2 +2h\langle e_k^j,R_k^j\rangle + h^2\|R_k^j\|^2 + h\|L_k^j\|^2.
\]

Since $h\le 1/2$, we have
\[
(1-h)^{-1}\le 1+2h.
\]
Therefore,
\begin{equation}\label{eq:energy-pre}
\|e_{k+1}^j\|^2
\le
(1+2h)\Bigl(
\|e_k^j\|^2 +2h\langle e_k^j,R_k^j\rangle + h^2\|R_k^j\|^2 + h\|L_k^j\|^2
\Bigr).
\end{equation}

We use the boundedness of both Euler trajectories on each layer.
More precisely, Lemma~\ref{lem:boundedness_onesided} yields
$\max_{j,k}\|\tilde y_k^j\|\le \max_{0\le j\le n}\tilde C_j$.
The same argument with $\delta=0$ gives analogous bounds for $\{y_k^j\}$.
Therefore, all factors of the form $(1+\|y_k^j\|+\|\tilde y_k^j\|)$ can be absorbed
into constants depending only on $\tau,n,d$ and the parameters in \textnormal{(F1)}--\textnormal{(F4)}.

\smallskip
\noindent\emph{(i) The one-sided Lipschitz term.}
By \textnormal{(F3)}, with $z=y_k^{j-1}$ fixed,
\[
\langle e_k^j,R_k^j\rangle
=
\big\langle y_k^j-\tilde y_k^j,\,
f(t_k^j,y_k^j,y_k^{j-1})-f(t_k^j,\tilde y_k^j,y_k^{j-1})\big\rangle
\le C\,\|e_k^j\|^2,
\]
hence $2h\langle e_k^j,R_k^j\rangle \le C h\|e_k^j\|^2$.

\smallskip
\noindent\emph{(ii) The $R_k^j$-term.}
Using \textnormal{(F4)} with $t_1=t_2=t_k^j$ and $z_1=z_2=y_k^{j-1}$, we obtain
\[
\|R_k^j\|\le C\big(\|e_k^j\|^{\beta_1}+\|e_k^j\|^{\beta_2}\big)
\le C(1+\|e_k^j\|),
\]
where we used $x^\rho\le 1+x$ for $x\ge 0$ and $\rho\in(0,1]$.
Thus,
\[
h^2\|R_k^j\|^2 \le C h^2 + C h^2\|e_k^j\|^2.
\]

\smallskip
\noindent\emph{(iii) The $L_k^j$-term (delay Hölder + noise).}
By the noise assumption and boundedness of $\tilde y$,
\[
\|\tilde\delta_f(\tilde U_k^j)\|\le \delta(1+\|\tilde y_k^j\|)(1+\|\tilde y_k^{j-1}\|)\le C\delta.
\]
Moreover, by \textnormal{(F4)} in the $z$-variable (exponent $\gamma$),
\[
\|f(t_k^j,\tilde y_k^j,y_k^{j-1})-f(t_k^j,\tilde y_k^j,\tilde y_k^{j-1})\|
\le C\|y_k^{j-1}-\tilde y_k^{j-1}\|^\gamma
= C\|e_k^{j-1}\|^\gamma.
\]
Hence,
\[
\|L_k^j\|\le C\big(\delta+\|e_k^{j-1}\|^\gamma\big),
\qquad
\|L_k^j\|^2\le C\big(\delta^2+\|e_k^{j-1}\|^{2\gamma}\big).
\]

Inserting the above bounds into \eqref{eq:energy-pre} and expanding the factor $(1+2h)$, we obtain terms of the form
\[
(1+2h)\bigl(1+Ch+Ch^2\bigr)\|e_k^j\|^2 + C(1+2h)h^2 + C(1+2h)h\bigl(\delta^2+\|e_k^{j-1}\|^{2\gamma}\bigr).
\]
Since $h\le 1/2$, we have $h^2\le h$, and therefore
\[
(1+2h)\bigl(1+Ch+Ch^2\bigr)\le 1+a_j h
\]
for a suitable constant $a_j>0$ independent of $N$. Hence, for each fixed $j$,
\begin{equation}\label{eq:star}
\|e_{k+1}^j\|^2
\le (1+a_j h)\|e_k^j\|^2
+ b_j h^2
+ c_j h\big(\delta^2+\|e_k^{j-1}\|^{2\gamma}\big),
\qquad k=0,\dots,N-1,
\end{equation}
with constants $a_j,b_j,c_j\ge 0$ independent of $N$.

Let $E_j:=\max_{0\le k\le N}\|e_k^j\|$. Since $e_0^j=e_N^{j-1}$, we have $\|e_0^j\|\le E_{j-1}$. Applying the discrete Gr\"onwall inequality
(Lemma~\ref{gronwall}) to \eqref{eq:star} gives
\begin{equation}\label{eq:Ej2}
E_j^2 \le C_j\Big(E_{j-1}^2 + h + \delta^2 + E_{j-1}^{2\gamma}\Big),
\qquad j=0,1,\dots,n,
\end{equation}
with constants $C_j$ independent of $N$.
Finally, on the history layer $j=-1$ we have $e_k^{-1}=\eta-\tilde\eta$, hence
$E_{-1}\le \delta$.

\smallskip
\noindent\emph{Case $\gamma=1$.}
Then \eqref{eq:Ej2} becomes $E_j^2 \le C_j(E_{j-1}^2+h+\delta^2)$.
Starting from $E_{-1}\le \delta$ we obtain inductively
$E_j^2\le \tilde C_j(h+\delta^2)$, hence
\[
E_j \le C_j\big(h^{1/2}+\delta\big),
\qquad j=0,1,\dots,n.
\]

\smallskip
\noindent\emph{Case $\gamma\in(0,1)$.}
From \eqref{eq:Ej2} we get
$E_0^2\le C(h+\delta^{2\gamma})$, hence $E_0\le C(h^{1/2}+\delta^\gamma)$.
Next, taking square roots in \eqref{eq:Ej2} yields the convenient form
\[
E_j \le C_j\Big(E_{j-1}+h^{1/2}+\delta+E_{j-1}^{\gamma}\Big).
\]
Using subadditivity $(x+y)^\gamma\le x^\gamma+y^\gamma$ for $\gamma\in(0,1)$ (and iterating it over finite sums), one checks by induction that the exponents propagate,
which leads to
\[
E_{j+1}\le
C_{j+1}\!\left(
\sum_{\ell = 0}^{j+1} h^{\frac{\gamma^{\ell}}{2}}
\;+\;
\sum_{\ell = 0}^{j+2} \delta^{\gamma^{\ell}}
\right),
\qquad j=0,1,\dots,n-1.
\]
This completes the proof.
\end{proof}

\begin{remark}
For $\delta=0$, we have $\tilde f=f$ and $\tilde\eta=\eta$, hence the noisy and noiseless Euler schemes coincide and $e_k^j\equiv 0$ for all $j,k$.
\end{remark}

\begin{theorem}
\label{thm:noisy_holder}
Let $\tau \in (0,+\infty)$, $n \in \mathbb{N}$, $\eta \in \mathbb{R}^d$ and $\delta \in [0,1]$. Let 
$f$ satisfy assumptions \textnormal{(F1)}–\textnormal{(F4)}. Then there exist constants
$\bar C_0,\bar C_1,\dots,\bar C_n \ge 0$
such that for all $N \ge 2\lceil \tau \rceil $ 
the following holds:

\begin{enumerate}[label=(\Alph*)]
\item \emph{If $\gamma = 1$, then}
\begin{equation}\label{eq:noisy_holder_gamma1_j0}
\sup_{t\in[0,\tau]}
\bigl\|\phi_0(t) - l_{N,\delta}^0(t)\bigr\|
\;\le\;
\bar C_0\bigl(h^{\alpha} + h^{\beta_1} + h^{\beta_2} + h^{1/2} + \delta\bigr),
\end{equation}
and for each $j=1,2,\dots,n$,
\begin{equation}\label{eq:noisy_holder_gamma1_j}
\sup_{t\in[j\tau,(j+1)\tau]}
\bigl\|\phi_j(t) - l_{N,\delta}^j(t)\bigr\|
\;\le\;
\bar C_j\bigl(h^{1/2} + h^{\alpha} + h^{\beta_1} + h^{\beta_2} + \delta\bigr).
\end{equation}

\item \emph{If $\gamma \in (0,1)$, then}
\begin{equation}\label{eq:noisy_holder_gamma<1_j0}
\sup_{t\in[0,\tau]}
\bigl\|\phi_0(t) - l_{N,\delta}^0(t)\bigr\|
\;\le\;
\bar C_0\bigl(h^{\alpha\wedge\gamma} + h^{\beta_1} + h^{\beta_2} + h^{1/2} 
             + \delta^{\gamma}\bigr),
\end{equation}
and for each $j=1,2,\dots,n$,
\begin{equation}\label{eq:noisy_holder_gamma<1_j}
\sup_{t\in[j\tau,(j+1)\tau]}
\bigl\|\phi_j(t) - l_{N,\delta}^j(t)\bigr\|
\;\le\;
\bar C_j\left(
\sum_{\ell=1}^{j}
 \bigl(
   h^{\frac{\gamma^{\ell}}{2}}
   + h^{\gamma^{\ell}(\alpha\wedge\gamma)}
   + h^{\beta_1\gamma^{\ell}}
   + h^{\beta_2\gamma^{\ell}}
 \bigr)
\;+\;
\sum_{\ell=0}^{j+2} \delta^{\gamma^{\ell}}
\right).
\end{equation}
\end{enumerate}
where $\phi_j$ denotes the solution $z=z(t)$ for $t\in [j\tau,(j+1)\tau]$
and $l_{N,\delta}^j$ denotes the piecewise-linear interpolation of the noisy Euler iterates.
\end{theorem}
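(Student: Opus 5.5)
The plan mirrors the proof of Theorem~\ref{thm:lipschitz_global_error}. On each layer $[j\tau,(j+1)\tau]$ we insert the noiseless interpolant $l_N^j$ and split
\[
\phi_j(t)-l_{N,\delta}^j(t)=\bigl(\phi_j(t)-l_N^j(t)\bigr)+\bigl(l_N^j(t)-l_{N,\delta}^j(t)\bigr).
\]
For the first (deterministic, noise-free) term we invoke the known error analysis of the Euler scheme under (F1)--(F4) from~\cite{ncz_pp_pm}; see also~\cite{nj_phd}. Under $\gamma=1$ this gives on every layer a bound $C_j(h^{1/2}+h^{\alpha}+h^{\beta_1}+h^{\beta_2})$. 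For $\gamma\in(0,1)$ it gives on layer $0$ a bound $C_0(h^{\alpha\wedge\gamma}+h^{\beta_1}+h^{\beta_2}+h^{1/2})$. On layer $j\ge1$ the rates degrade through the delayed Hölder exponent, giving the sum $\sum_{\ell=1}^{j}\bigl(h^{\gamma^\ell/2}+h^{\gamma^\ell(\alpha\wedge\gamma)}+h^{\beta_1\gamma^\ell}+h^{\beta_2\gamma^\ell}\bigr)$, possibly up to terms of the same type with $\ell=0$. Those $\ell=0$ terms are dominated by the $\ell=1$ terms because $h\le 1/2$ and $\gamma<1$.

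If that deterministic estimate is not directly available in exactly this form, we would reprove it by the same energy technique used in Lemma~\ref{thm:euler_error}. First apply it to the defect $z(t_k^j)-y_k^j$. The local truncation error is then controlled by the Hölder regularity of $z$, which is Lipschitz by (F2) and boundedness of $z$ from Lemma~\ref{lem3.1}. Together with (F4) this gives a local error $O(h^{1+\alpha}+h^{1+\beta_i}+h^{1+\gamma})$ plus the delayed term $\|z(t_k^{j-1})-y_k^{j-1}\|^{\gamma}$, which propagates across layers exactly as $E_{j-1}^\gamma$ did in \eqref{eq:Ej2}. Between grid points, $z$ is Lipschitz and $l_N^j$ is linear with bounded slope, so interpolation costs only $O(h)$.

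For the second term we argue exactly as in the proof of Theorem~\ref{thm:lipschitz_global_error}. On $[t_k^j,t_{k+1}^j]$ we have $l_N^j-l_{N,\delta}^j=(1-\theta)e_k^j+\theta e_{k+1}^j$, so its supremum is bounded by $\max_k\|e_k^j\|$. Lemma~\ref{thm:euler_error} then gives $C_j(h^{1/2}+\delta)$ in case (A). In case (B) it gives $C_0(h^{1/2}+\delta^\gamma)$ on layer $0$ and $C_j\bigl(\sum_{\ell=0}^{j}h^{\gamma^\ell/2}+\sum_{\ell=0}^{j+1}\delta^{\gamma^\ell}\bigr)$ on layer $j\ge1$. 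The lemma is stated for $\delta>0$; for $\delta=0$ the remark after Lemma~\ref{thm:euler_error} shows this term vanishes.

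Adding the two bounds, we absorb lower-order powers into the dominant ones using $h,\delta\le1$: for $0<a\le b$ we have $h^b\le h^a$, and $h^{1/2}\le h^{\gamma/2}$. The delta-sum index range in the lemma is already contained in the claimed range $\ell\le j+2$. The main obstacle is the deterministic part in case (B): we must make sure the layer-to-layer propagation of $\|z-y\|^\gamma$ produces exactly the exponents $\gamma^\ell(\alpha\wedge\gamma)$ and $\beta_i\gamma^\ell$. We would handle this by an induction on $j$ using $(x+y)^\gamma\le x^\gamma+y^\gamma$, as at the end of the proof of Lemma~\ref{thm:euler_error}.
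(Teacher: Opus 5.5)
Your proposal is correct and takes essentially the same route as the paper. The shared steps are the split through the noiseless interpolant $l_N^j$, the deterministic bounds quoted from \cite[Theorems~3.2--3.3]{ncz_pp_pm}, and the estimate $\sup_t\|l_N^j(t)-l_{N,\delta}^j(t)\|\le\max_k\|e_k^j\|$ combined with Lemma~\ref{thm:euler_error}, followed by absorbing the $\ell=0$ terms and the shorter $\delta$-sum using $h\le 1/2$ and $\delta\le 1$. Your fallback re-derivation of the deterministic estimate and your remark on $\delta=0$ are sound but not needed, since the paper simply cites the noiseless result.
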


\begin{remark}
In particular, for $\gamma=1$ the noise does not accumulate over the 
subintervals and the global error of the noisy Euler scheme is of order
$O\bigl(h^{1/2} + h^{\alpha} + h^{\beta_1} + h^{\beta_2} + \delta\bigr)$
on each interval $[j\tau,(j+1)\tau]$.
\end{remark}


\begin{proof}
Fix $n\in\mathbb{N}_0$ and $N\in\mathbb{N}$. For each $j=0,1,\dots,n$ let
$\{y_k^j\}_{k=0}^N$ denote the Euler iterates corresponding to the exact
right-hand side $f$ (without noise) and the exact initial value $\eta$, and let
$l_N^j$ be their piecewise linear interpolation on $[j\tau,(j+1)\tau]$. The
noisy iterates are denoted by $\{\tilde y_k^j\}_{k=0}^N$, and
$l_{N,\delta}^j$ is the corresponding interpolation. Moreover, we write
$\phi_j$ for the restriction of the exact solution $z$ of
\eqref{main_eq} to $[j\tau,(j+1)\tau]$.

For $t\in[j\tau,(j+1)\tau]$ we decompose
\[
\phi_j(t) - l_{N,\delta}^j(t)
= \bigl(\phi_j(t) - l_N^j(t)\bigr)
 + \bigl(l_N^j(t) - l_{N,\delta}^j(t)\bigr).
\]

\medskip
\noindent
\emph{Deterministic Euler error (noiseless case).}
Under assumptions~\textnormal{(F1)}–\textnormal{(F4)}, the error analysis of the Euler scheme for DDEs with one-sided Lipschitz condition and local H\"older
regularity (see \cite[Theorems~3.2–3.3]{ncz_pp_pm}) yields the following bounds
for the scheme based on the exact right-hand side $f$:
\begin{enumerate}[label=(D\arabic*)]
\item If $\gamma=1$, then there exist constants
$C_0^{\mathrm{det}},\dots,C_n^{\mathrm{det}}\ge0$ such that
\begin{align}
\sup_{t\in[0,\tau]}
\bigl\|\phi_0(t) - l_N^0(t)\bigr\|
&\le C_0^{\mathrm{det}}\bigl(
  h^{\alpha} + h^{\beta_1} + h^{\beta_2} + h^{1/2}\bigr),
\label{eq:det-gamma1-j0}
\\
\sup_{t\in[j\tau,(j+1)\tau]}
\bigl\|\phi_j(t) - l_N^j(t)\bigr\|
&\le C_j^{\mathrm{det}}\bigl(
  h^{1/2} + h^{\alpha} + h^{\beta_1} + h^{\beta_2}\bigr),
\quad j=1,\dots,n.
\label{eq:det-gamma1-j}
\end{align}

\item If $\gamma\in(0,1)$, then there exist constants
$C_0^{\mathrm{det}},\dots,C_n^{\mathrm{det}}\ge0$ such that
\begin{align}
\sup_{t\in[0,\tau]}
\bigl\|\phi_0(t) - l_N^0(t)\bigr\|
&\le C_0^{\mathrm{det}}\bigl(
  h^{\alpha\wedge\gamma} + h^{\beta_1} + h^{\beta_2} + h^{1/2}\bigr),
\label{eq:det-gamma<1-j0}
\\
\sup_{t\in[j\tau,(j+1)\tau]}
\bigl\|\phi_j(t) - l_N^j(t)\bigr\|
&\le C_j^{\mathrm{det}}
  \sum_{\ell=1}^{j}
 \bigl(
   h^{\frac{\gamma^{\ell}}{2}}
   + h^{\gamma^{\ell}(\alpha\wedge\gamma)}
   + h^{\beta_1\gamma^{\ell}}
   + h^{\beta_2\gamma^{\ell}}
 \bigr),
\quad j=1,\dots,n.
\label{eq:det-gamma<1-j}
\end{align}
\end{enumerate}
These estimates correspond to the case $\delta=0$ (exact information model).

\medskip
\noindent
\emph{Error between noiseless and noisy Euler schemes.}
Set $e_k^j := y_k^j - \tilde y_k^j$ for $k=0,\dots,N$ and $j=0,\dots,n$. By
Lemma~\ref{lem:boundedness_onesided}, the noisy terms are uniformly bounded
in $j,k$, and using (F3), (F4) together with Assumption~\ref{ass:noise} one
obtains (cf. Lemma~\ref{thm:euler_error}) the following grid–error bounds:
\begin{enumerate}[label=(E\arabic*)]
\item If $\gamma=1$, then there exist constants $C_0^e,\dots,C_n^e>0$ such that
\begin{equation}\label{eq:euler-error-gamma1}
\max_{0\le k\le N}\|e_k^{\,j}\|
\;\le\;
C_j^e\bigl(h^{1/2} + \delta\bigr),
\qquad j=0,1,\dots,n.
\end{equation}

\item If $\gamma\in(0,1)$, then there exist constants $C_0^e,\dots,C_n^e>0$
such that
\begin{align}
\max_{0\le k\le N}\|e_k^{\,0}\|
&\le C_0^e\bigl(h^{1/2} + \delta^{\gamma}\bigr),
\label{eq:euler-error-gamma<1-0}
\\
\max_{0\le k\le N}\|e_k^{\,j}\|
&\le C_j^e\left(
\sum_{\ell = 0}^{j} h^{\frac{\gamma^{\ell}}{2}}
\;+\;
\sum_{\ell = 0}^{j+1} \delta^{\gamma^{\ell}}
\right),
\qquad j=1,\dots,n.
\label{eq:euler-error-gamma<1-j}
\end{align}
\end{enumerate}

For fixed $j$ and $t\in[t_k^j,t_{k+1}^j]$ we can write
\[
l_N^{j}(t)
= (1-\theta)\,y_k^j + \theta\,y_{k+1}^j,
\qquad
l_{N,\delta}^{j}(t)
= (1-\theta)\,\tilde y_k^j + \theta\,\tilde y_{k+1}^j,
\]
for some $\theta = \theta(t)\in[0,1]$. Hence
\[
l_N^{j}(t) - l_{N,\delta}^{j}(t)
= (1-\theta)\,e_k^j + \theta\,e_{k+1}^j,
\]
and therefore
\begin{equation}\label{eq:interp-diff}
\sup_{t\in[j\tau,(j+1)\tau]}
\bigl\|l_N^{j}(t) - l_{N,\delta}^{j}(t)\bigr\|
\le \max_{0\le k\le N}\|e_k^j\|.
\end{equation}

\medskip
\noindent
\emph{Combination of estimates.}
Using the triangle inequality we obtain
\[
\sup_{t\in[j\tau,(j+1)\tau]}
\bigl\|\phi_j(t) - l_{N,\delta}^j(t)\bigr\|
\le
\sup_{t\in[j\tau,(j+1)\tau]}
\bigl\|\phi_j(t) - l_N^j(t)\bigr\|
+
\sup_{t\in[j\tau,(j+1)\tau]}
\bigl\|l_N^{j}(t) - l_{N,\delta}^{j}(t)\bigr\|.
\]

\smallskip
\noindent
\textbf{Case $\gamma = 1$.}
For $j=0$, combining \eqref{eq:det-gamma1-j0},
\eqref{eq:euler-error-gamma1} and~\eqref{eq:interp-diff} yields
\[
\sup_{t\in[0,\tau]}
\bigl\|\phi_0(t) - l_{N,\delta}^0(t)\bigr\|
\le
C_0^{\mathrm{det}}\bigl(h^{\alpha} + h^{\beta_1} + h^{\beta_2} + h^{1/2}\bigr)
+ C_0^e\bigl(h^{1/2} + \delta\bigr)
\le
\bar C_0\bigl(h^{\alpha} + h^{\beta_1} + h^{\beta_2} + h^{1/2} + \delta\bigr).
\]
For $j\ge1$, using \eqref{eq:det-gamma1-j}, \eqref{eq:euler-error-gamma1}
and~\eqref{eq:interp-diff} we obtain
\[
\sup_{t\in[j\tau,(j+1)\tau]}
\bigl\|\phi_j(t) - l_{N,\delta}^j(t)\bigr\|
\le
C_j^{\mathrm{det}}\bigl(h^{1/2} + h^{\alpha} + h^{\beta_1} + h^{\beta_2}\bigr)
+ C_j^e\bigl(h^{1/2} + \delta\bigr)
\le
\bar C_j\bigl(h^{1/2} + h^{\alpha} + h^{\beta_1} + h^{\beta_2} + \delta\bigr),
\]
which proves \eqref{eq:noisy_holder_gamma1_j0}–\eqref{eq:noisy_holder_gamma1_j}.

\smallskip
\noindent
\textbf{Case $\gamma\in(0,1)$.}
For $j=0$, we combine \eqref{eq:det-gamma<1-j0},
\eqref{eq:euler-error-gamma<1-0} and~\eqref{eq:interp-diff} to obtain
\[
\sup_{t\in[0,\tau]}
\bigl\|\phi_0(t) - l_{N,\delta}^0(t)\bigr\|
\le
C_0^{\mathrm{det}}\bigl(h^{\alpha\wedge\gamma} + h^{\beta_1} + h^{\beta_2} + h^{1/2}\bigr)
+ C_0^e\bigl(h^{1/2} + \delta^{\gamma}\bigr)
\le
\bar C_0\bigl(h^{\alpha\wedge\gamma} + h^{\beta_1} + h^{\beta_2} + h^{1/2}
             + \delta^{\gamma}\bigr),
\]
which gives \eqref{eq:noisy_holder_gamma<1_j0}.

For $j\ge1$, by \eqref{eq:det-gamma<1-j},
\eqref{eq:euler-error-gamma<1-j} and~\eqref{eq:interp-diff},
\[
\sup_{t\in[j\tau,(j+1)\tau]}
\bigl\|\phi_j(t) - l_{N,\delta}^j(t)\bigr\|
\le
C_j^{\mathrm{det}}
  \sum_{\ell=1}^{j}
 \bigl(
   h^{\frac{\gamma^{\ell}}{2}}
   + h^{\gamma^{\ell}(\alpha\wedge\gamma)}
   + h^{\beta_1\gamma^{\ell}}
   + h^{\beta_2\gamma^{\ell}}
 \bigr)
+
C_j^e\left(
\sum_{\ell = 0}^{j} h^{\frac{\gamma^{\ell}}{2}}
\;+\;
\sum_{\ell = 0}^{j+1} \delta^{\gamma^{\ell}}
\right).
\]
Since the sums in the estimate for $\max_k\|e_k^j\|$ start from $\ell=0$
while the deterministic contributions start from $\ell=1$, we can absorb
all terms into a single constant $\bar C_j$ and slightly extend the indices
in the $\delta$–sum, obtaining
\[
\sup_{t\in[j\tau,(j+1)\tau]}
\bigl\|\phi_j(t) - l_{N,\delta}^j(t)\bigr\|
\le
\bar C_j\left(
\sum_{\ell=1}^{j}
 \bigl(
   h^{\frac{\gamma^{\ell}}{2}}
   + h^{\gamma^{\ell}(\alpha\wedge\gamma)}
   + h^{\beta_1\gamma^{\ell}}
   + h^{\beta_2\gamma^{\ell}}
 \bigr)
\;+\;
\sum_{\ell=0}^{j+2} \delta^{\gamma^{\ell}}
\right),
\]
which is precisely \eqref{eq:noisy_holder_gamma<1_j}. This completes the proof.
\end{proof}

\section{Numerical experiments}

We focus on two aspects: convergence with respect to the stepsize \(h\) and error propagation across successive delay intervals. Computations are carried out for multiple noise levels \(\delta\) and two values of the delay exponent \(\gamma \in \{0.225, 1.0\}\).

\subsection{Test equations}

We consider four scalar delay differential equations of the form
\[
z'(t) = f(t, z(t), z(t-\tau)), 
\qquad t \ge 0,
\]
with constant delay $\tau>0$ and constant history $z(t) \equiv \eta$ for $t \in [-\tau,0]$.
In all experiments we use
\[
\eta = z_0 = 0.05854.
\]

The right-hand sides $f_1,\dots,f_4$ are defined as follows (for $(t,y,z) \in [0,\infty)\times\mathbb{R}\times\mathbb{R}$).

\paragraph{Example~1 (metal-type drift with $\mathrm{sgn}(y)$).}
We set
\[
f_1(t,y,z)
=
A
- B\,\mathrm{sgn}(y)\,|y|
- C\,\mathrm{sgn}(y)\,|y|^{\rho}\,|z|^{\gamma}
+ D\,y\,|z|^{\gamma},
\]
with parameters
\[
A = 1.7137, \qquad
B = 0.7769, \qquad
C = 0.5895, \qquad
D = -0.82615, \qquad
\rho = 0.973,
\]
and $\gamma \in \{0.225,\,1.0\}$.

\paragraph{Example~2 (one-sided power nonlinearity in $y$ and Hölder term in $z$).}
We define
\[
f_2(t,y,z)
=
\sin(t)
+ c\,|z|^{\gamma}
- a\,y
- (1+|z|)\,\bigl(\max\{y,0\}\bigr)^{\beta},
\]
with parameters
\[
\beta = 0.7, \qquad
a = 0.2, \qquad
c = 2.0, \qquad
\gamma \in \{0.225,\,1.0\}.
\]

\paragraph{Example~3 (symmetric power in $y$ and Hölder term in $z$).}
Here
\[
f_3(t,y,z)
=
\sin(t)
+ c\,\mathrm{sgn}(z)\,|z|^{\gamma}
- a\,y
- (1+|z|)\,\mathrm{sgn}(y)\,|y|^{\beta},
\]
with the same parameters as in Example~2:
\[
\beta = 0.7, \qquad
a = 0.2, \qquad
c = 2.0, \qquad
\gamma \in \{0.225,\,1.0\}.
\]

\paragraph{Example~4 (modified oscillatory delay equation).}
Finally, we consider
\[
f_4(t,y,z)
=
3\,\mathrm{sgn}(z)\,|z|^{\gamma}\,\sin(\lambda t),
\]
with
\[
\lambda = 1.0, 
\qquad
\gamma \in \{0.225,\,1.0\}.
\]

In all four examples $f_i$ satisfies assumptions \textnormal{(F1)}–\textnormal{(F4)} and is not globally Lipschitz in the delayed argument when $\gamma \in (0,1)$.

\subsection{Perturbed information model}

In all noisy experiments we work with the perturbed right-hand side
\[
\tilde{f}_\delta(t,y,z)
=
f(t,y,z) + \tilde{\delta}_f(t,y,z),
\]
where the perturbation is given by
\[
\tilde{\delta}_f(t,y,z)
=
\delta\, U(-1,1)\, \bigl(1 + |y| + |z|\bigr),
\]
$\delta \in [0,1]$ is the noise level and $U(-1,1)$ denotes a random variable
uniformly distributed on the interval $[-1,1]$.
Independent samples of $U(-1,1)$ are used at each time step and for each trajectory.
We consider the noise levels
\[
\delta \in \{0.0,\,0.01,\,0.05,\,0.1,\,0.2,\,0.5,\,0.75,\,1.0\}.
\]
Although the perturbation is random in simulations, it satisfies the deterministic noise bound
from Assumption~\ref{ass:noise} pathwise, since $|U(-1,1)|\le 1$ and
\[
1+|y|+|z| \le (1+|y|)(1+|z|).
\]
Hence the theoretical bounds apply to each realization.

\subsection{Convergence experiment with respect to the step size}

We set \(\tau=20\), \(n=9\), and \(T=(n+1)\tau\). For
\[
N\in\{\lfloor 100\cdot 1.3^i\rfloor:\ i=0,\dots,12\},\qquad h=\tau/N,
\]
we compute Euler trajectories and a reference trajectory on a refined grid \(N_{\mathrm{ref}}=50N\) (with \(\delta=0\)), then sample the reference back to the coarse grid. For each \((f,\gamma,\delta,N)\), we generate \(50\) noisy trajectories and record the interval-wise and cumulative supremum errors relative to the reference.

The convergence plots in Fig.~\ref{fig:convergence-4x2} show the expected transition between a discretization-dominated regime and a noise-dominated regime. For small \(\delta\), errors decrease with \(h\); for larger \(\delta\), a plateau appears. The plateau is reached earlier and at a higher level when \(\gamma<1\), consistently with the theoretical bounds.

\begin{figure}[htbp]
    \centering

    \begin{subfigure}[t]{0.4\textwidth}
        \centering
        \includegraphics[width=\textwidth]{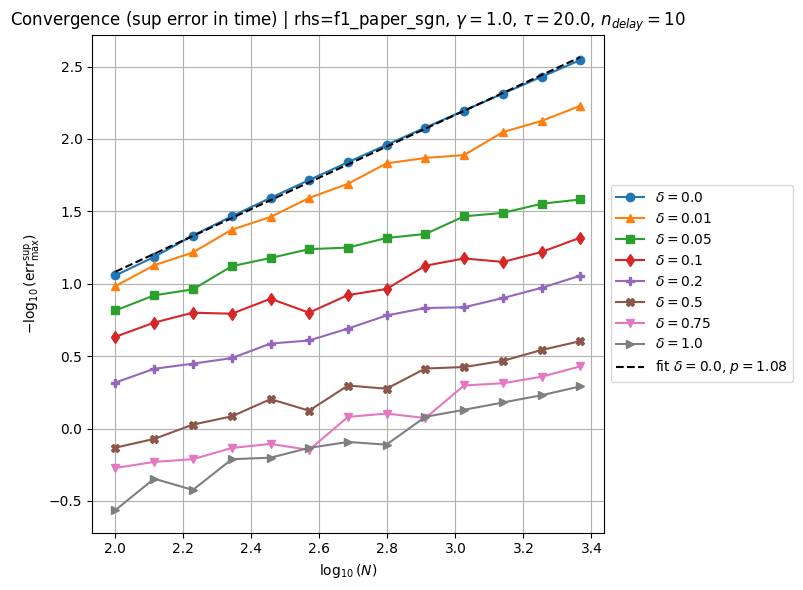}
        \caption{$f_1$, $\gamma = 1.0$}
        \label{fig:f1-gamma-1}
    \end{subfigure}\hfill
    \begin{subfigure}[t]{0.4\textwidth}
        \centering
        \includegraphics[width=\textwidth]{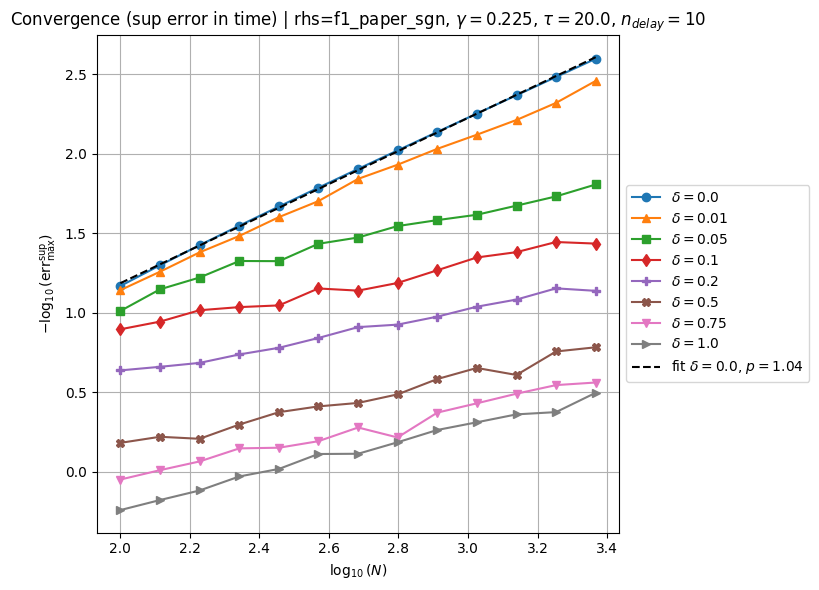}
        \caption{$f_1$, $\gamma = 0.225$}
        \label{fig:f1-gamma-0.225}
    \end{subfigure}

    \vspace{0.4cm}

    \begin{subfigure}[t]{0.4\textwidth}
        \centering
        \includegraphics[width=\textwidth]{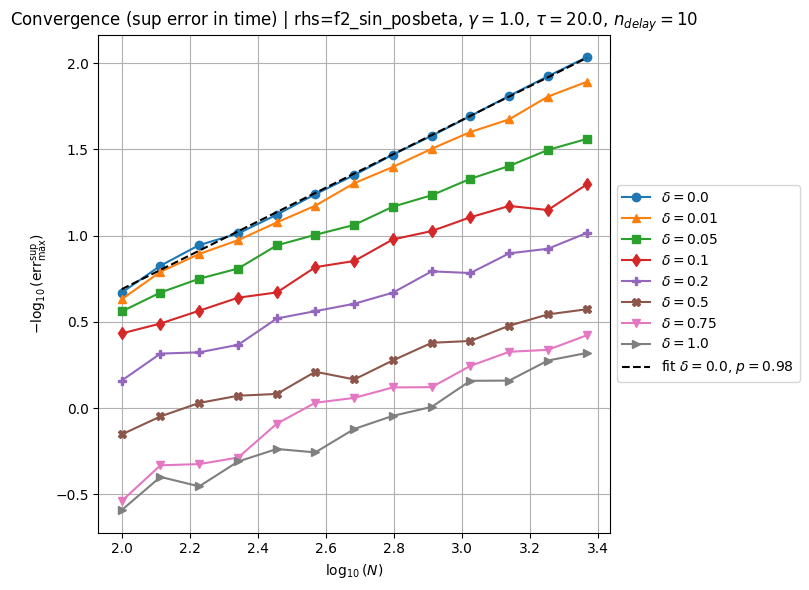}
        \caption{$f_2$, $\gamma = 1.0$}
        \label{fig:f2-gamma-1}
    \end{subfigure}\hfill
    \begin{subfigure}[t]{0.4\textwidth}
        \centering
        \includegraphics[width=\textwidth]{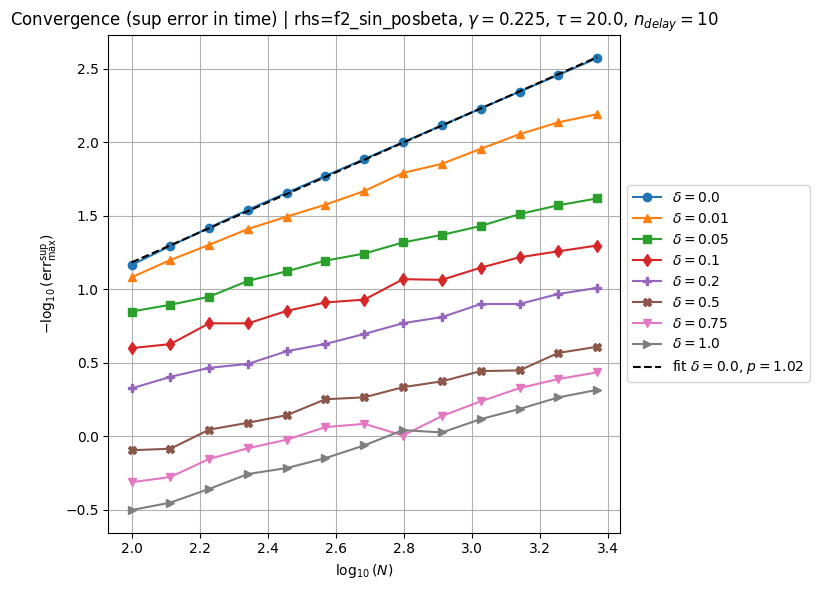}
        \caption{$f_2$, $\gamma = 0.225$}
        \label{fig:f2-gamma-0.225}
    \end{subfigure}

    \vspace{0.4cm}

    \begin{subfigure}[t]{0.4\textwidth}
        \centering
        \includegraphics[width=\textwidth]{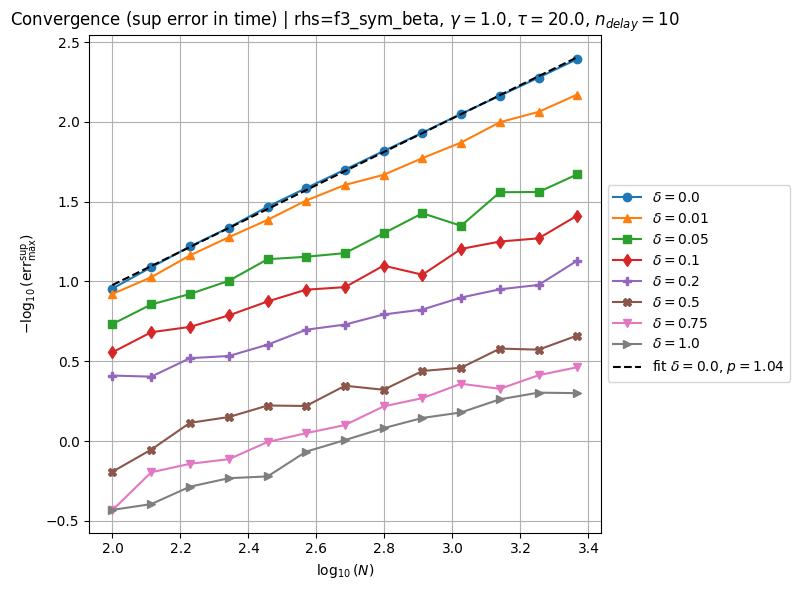}
        \caption{$f_3$, $\gamma = 1.0$}
        \label{fig:f3-gamma-1}
    \end{subfigure}\hfill
    \begin{subfigure}[t]{0.4\textwidth}
        \centering
        \includegraphics[width=\textwidth]{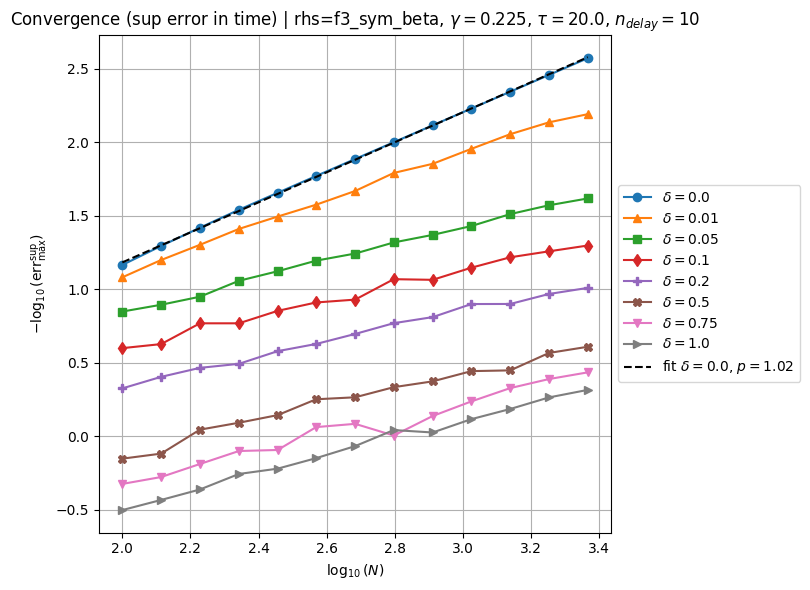}
        \caption{$f_3$, $\gamma = 0.225$}
        \label{fig:f3-gamma-0.225}
    \end{subfigure}

    \vspace{0.4cm}

    \begin{subfigure}[t]{0.4\textwidth}
        \centering
        \includegraphics[width=\textwidth]{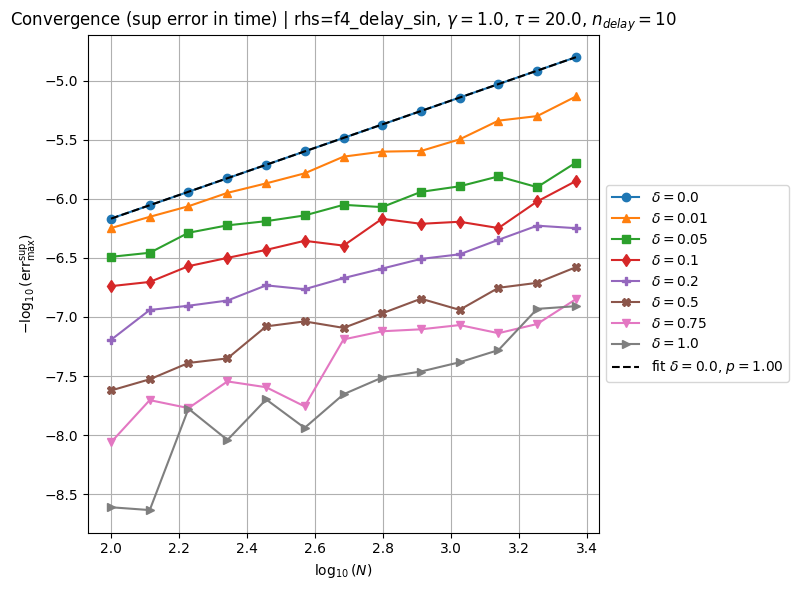}
        \caption{$f_4$, $\gamma = 1.0$}
        \label{fig:f4-gamma-1}
    \end{subfigure}\hfill
    \begin{subfigure}[t]{0.4\textwidth}
        \centering
        \includegraphics[width=\textwidth]{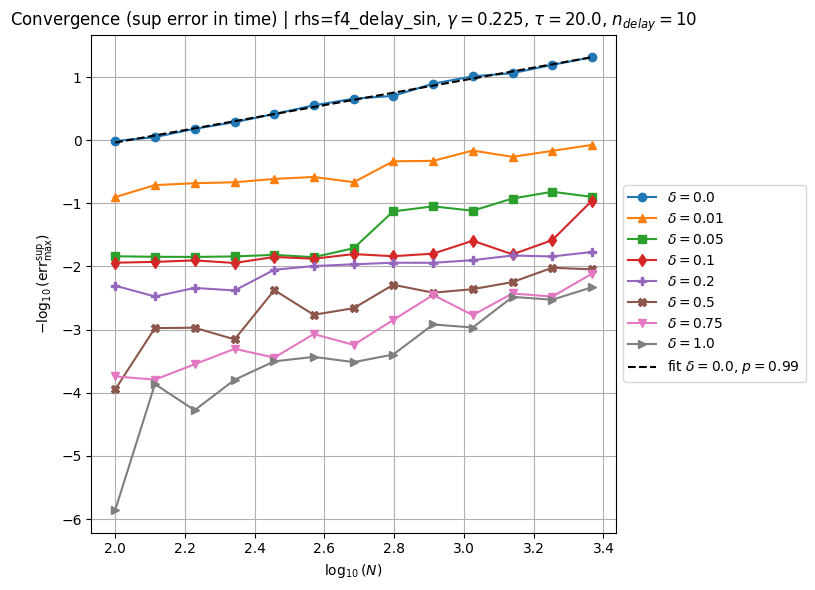}
        \caption{$f_4$, $\gamma = 0.225$}
        \label{fig:f4-gamma-0.225}
    \end{subfigure}

    \caption{Empirical convergence plots for the four test equations $f_1,f_2,f_3,f_4$ and two values of $\gamma$.}
    \label{fig:convergence-4x2}
\end{figure}

\subsection{Interval-wise and cumulative supremum errors}

To track error propagation across delay intervals, for \(j=0,\dots,n\) we define
\[
E^{\mathrm{loc}}_j
:=\sup_{t\in[j\tau,(j+1)\tau]}
|y_h^\delta(t)-y_{\mathrm{ref}}(t)|,
\]
\[
E^{\mathrm{cum}}_j
:=\sup_{t\in[0,(j+1)\tau]}
|y_h^\delta(t)-y_{\mathrm{ref}}(t)|.
\]
Here \(E^{\mathrm{loc}}_j\) measures the local error on the \(j\)-th interval, while \(E^{\mathrm{cum}}_j\) measures accumulated error up to time \((j+1)\tau\). By definition, \(\{E^{\mathrm{cum}}_j\}\) is nondecreasing.

The cumulative-supremum plots in Fig.~\ref{fig:cum-sup-4x2} show systematic growth with \(\delta\). For \(\gamma=1\), growth across intervals is milder. For \(\gamma<1\), accumulation is stronger, and reducing \(h\) becomes less effective over long horizons.

\begin{figure}[htbp]
    \centering

    \begin{subfigure}[t]{0.48\textwidth}
        \centering
        \includegraphics[width=\textwidth]{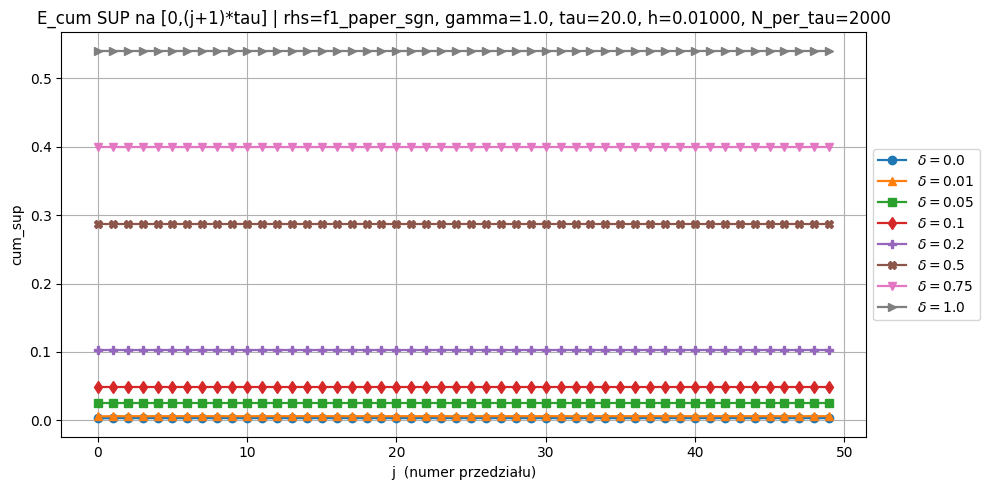}
        \caption{$f_1$, $\gamma = 1.0$}
        \label{fig:f1-gamma-1_cum_sup}
    \end{subfigure}\hfill
    \begin{subfigure}[t]{0.48\textwidth}
        \centering
        \includegraphics[width=\textwidth]{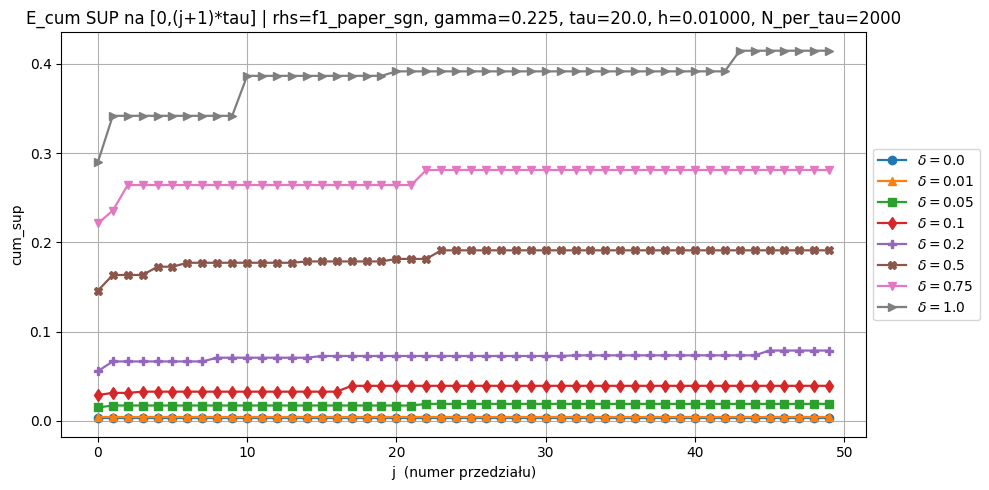}
        \caption{$f_1$, $\gamma = 0.225$}
        \label{fig:f1-gamma-0.225_cum_sup}
    \end{subfigure}

    \vspace{0.4cm}

    \begin{subfigure}[t]{0.48\textwidth}
        \centering
        \includegraphics[width=\textwidth]{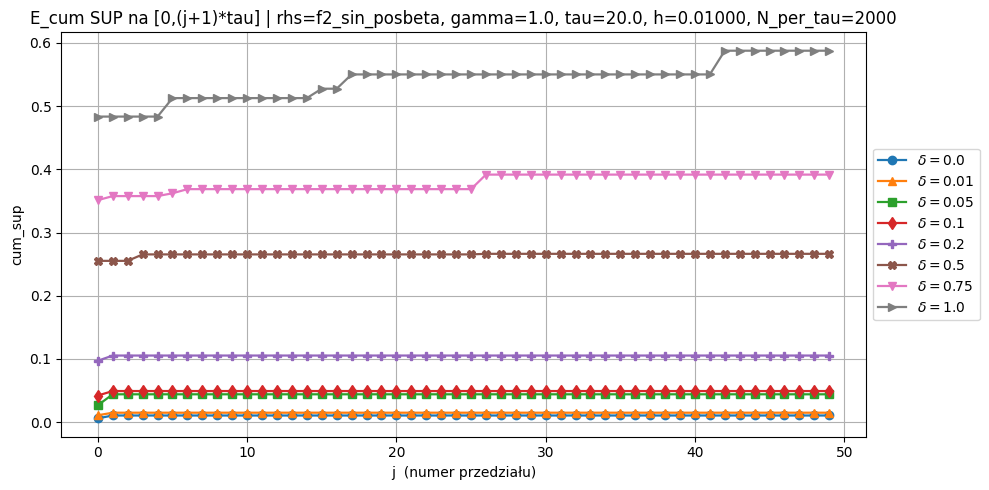}
        \caption{$f_2$, $\gamma = 1.0$}
        \label{fig:f2-gamma-1_cum_sup}
    \end{subfigure}\hfill
    \begin{subfigure}[t]{0.48\textwidth}
        \centering
        \includegraphics[width=\textwidth]{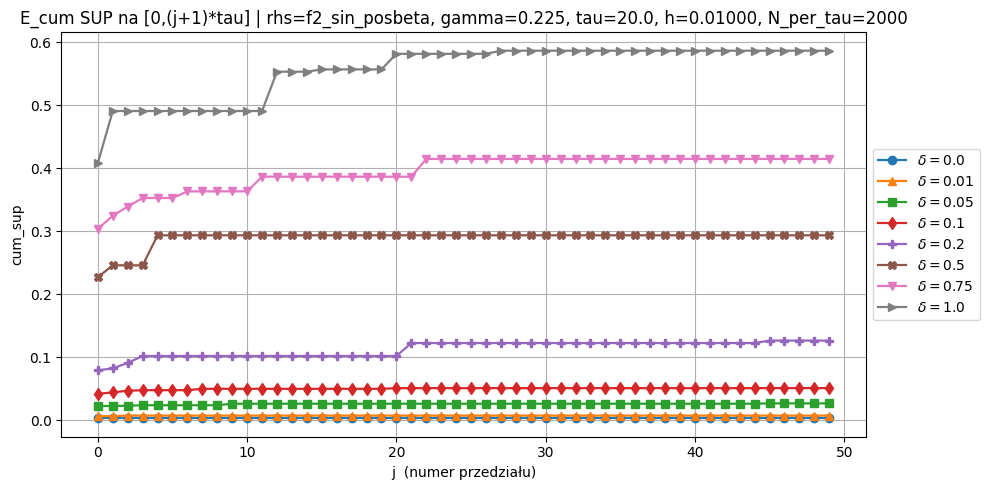}
        \caption{$f_2$, $\gamma = 0.225$}
        \label{fig:f2-gamma-0.225_cum_sup}
    \end{subfigure}

    \vspace{0.4cm}

    \begin{subfigure}[t]{0.48\textwidth}
        \centering
        \includegraphics[width=\textwidth]{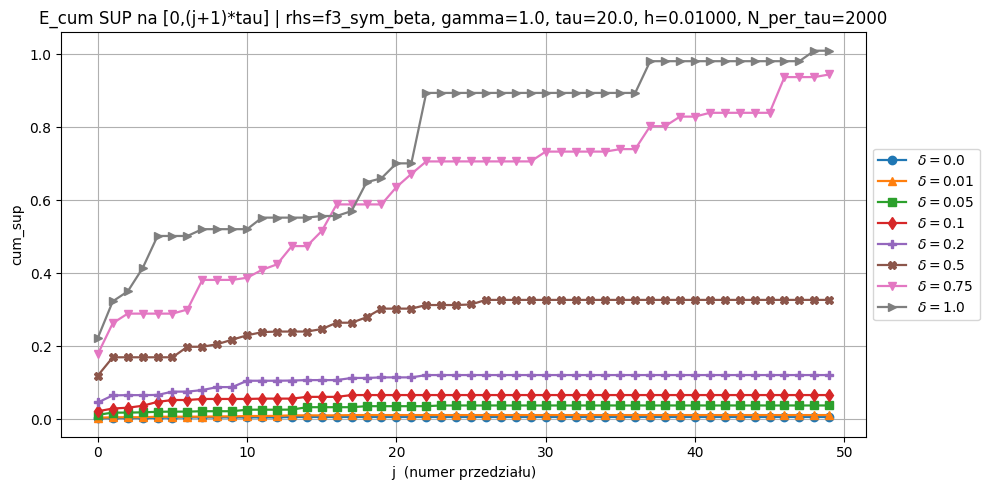}
        \caption{$f_3$, $\gamma = 1.0$}
        \label{fig:f3-gamma-1_cum_sup}
    \end{subfigure}\hfill
    \begin{subfigure}[t]{0.48\textwidth}
        \centering
        \includegraphics[width=\textwidth]{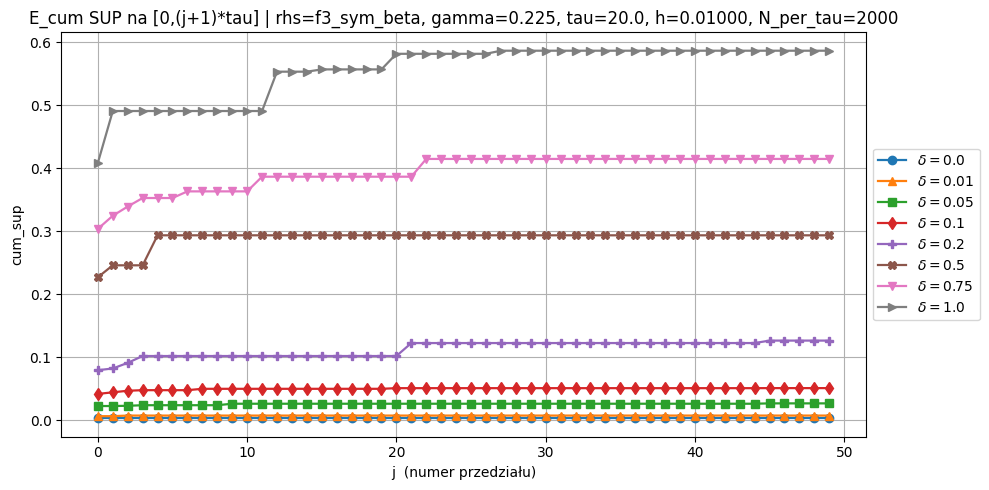}
        \caption{$f_3$, $\gamma = 0.225$}
        \label{fig:f3-gamma-0.225_cum_sup}
    \end{subfigure}

    \vspace{0.4cm}

    \begin{subfigure}[t]{0.48\textwidth}
        \centering
        \includegraphics[width=\textwidth]{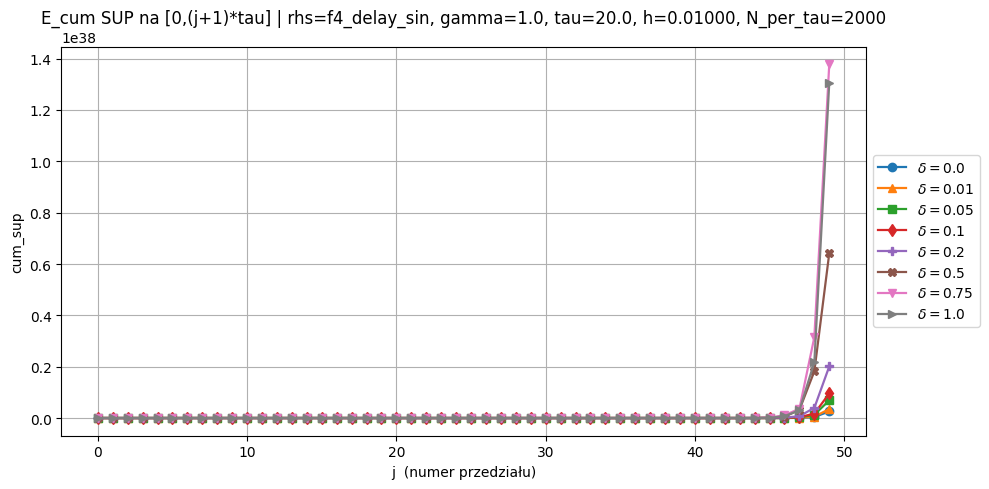}
        \caption{$f_4$, $\gamma = 1.0$}
        \label{fig:f4-gamma-1_cum_sup}
    \end{subfigure}\hfill
    \begin{subfigure}[t]{0.48\textwidth}
        \centering
        \includegraphics[width=\textwidth]{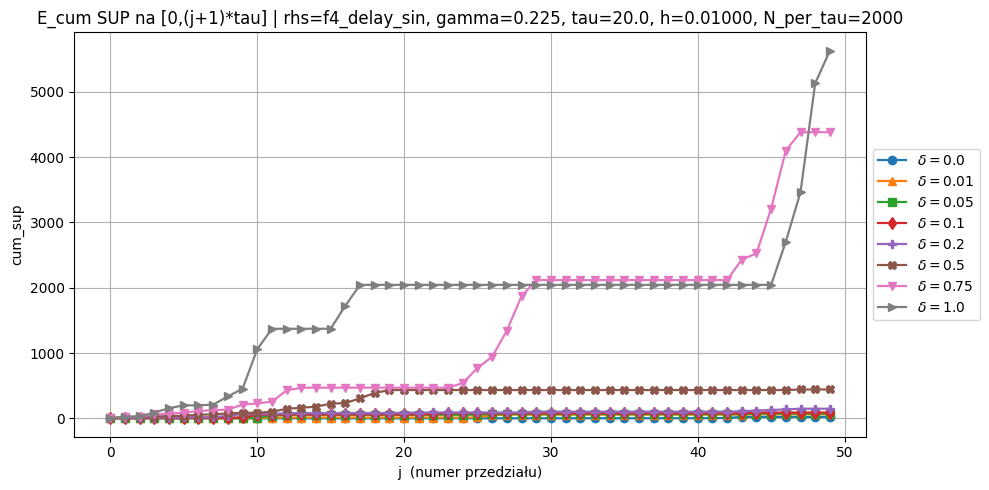}
        \caption{$f_4$, $\gamma = 0.225$}
        \label{fig:f4-gamma-0.225_cum_sup}
    \end{subfigure}

    \caption{Cumulative supremum errors for the four test equations \(f_1,f_2,f_3,f_4\) and two values of \(\gamma\).}
    \label{fig:cum-sup-4x2}
\end{figure}

\subsection{Summary}
The experiments support the theoretical picture. The method is stable under small perturbations, but the long-time effect of information noise depends strongly on \(\gamma\). In particular, the transition from \(\gamma=1\) to \(\gamma<1\) amplifies cumulative error over successive delay intervals.

\section{Conclusion and future work}
This paper establishes upper error bounds for Euler-type approximations of nonlinear delay differential equations under inexact information, covering both globally Lipschitz and non-globally Lipschitz regimes. Numerical experiments corroborate the predicted convergence rates. Future work will focus on deriving matching lower bounds (complexity lower limits) to quantify the sharpness of these rates, and on extending the analysis to adaptive methods and broader classes of perturbed-data problems.

\section*{Funding}
Funding not applicable.

\appendix
\section{}\label{appendixA}

\begin{lemma}[Gr\"onwall's Lemma - discrete version]\label{gronwall}
Let $A, B \geq 0$.  
If the sequence $\{\xi_n\}_{n=0}^{+\infty} \subset \mathbb{R}$ satisfies the inequality  
for every $n \in \mathbb{N}_0$
\[
|\xi_{n+1}| \leq A |\xi_n| + B,
\]
then for every $n \in \mathbb{N}$ we have
\[
|\xi_n| \leq A^n |\xi_0| + C_n,
\]
where 
\[
C_n = 
\begin{cases}
\displaystyle \frac{A^n - 1}{A - 1} B, & \text{if } A \neq 1, \\[1.2ex]
nB, & \text{if } A = 1.
\end{cases}
\]
\end{lemma}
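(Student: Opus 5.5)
The statement to prove is the discrete Gr\"onwall lemma (Lemma~\ref{gronwall}). I would argue by induction on $n$, unrolling the one-step inequality $|\xi_{n+1}|\le A|\xi_n|+B$. The target formula can be written uniformly as $|\xi_n|\le A^n|\xi_0| + B\sum_{i=0}^{n-1}A^i$. Evaluating the geometric sum then gives $C_n$ in both cases: for $A\neq 1$, $\sum_{i=0}^{n-1}A^i=(A^n-1)/(A-1)$, and for $A=1$ it equals $n$. So the plan is first to prove the sum form, and only afterwards to identify it with the case-split expression for $C_n$.

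\emph{Base case $n=1$.} This is exactly the hypothesis applied at index $0$: $|\xi_1|\le A|\xi_0|+B$, and $\sum_{i=0}^{0}A^i=1$.

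\emph{Inductive step.} Assume $|\xi_n|\le A^n|\xi_0|+B\sum_{i=0}^{n-1}A^i$. Apply the hypothesis at index $n$, and use $A\ge 0$ so that multiplying the inductive bound by $A$ preserves the inequality:
\[
|\xi_{n+1}|\le A|\xi_n|+B\le A^{n+1}|\xi_0|+B\sum_{i=1}^{n}A^i+B=A^{n+1}|\xi_0|+B\sum_{i=0}^{n}A^i .
\]
This closes the induction.

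The only point needing care is the nonnegativity of $A$; without it the multiplication step would fail. The case distinction for $C_n$ is then purely algebraic and presents no real obstacle. One could also note that the $A=1$ case is the limit of the $A\neq1$ formula, but the direct sum evaluation makes that unnecessary.
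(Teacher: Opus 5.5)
Your induction is correct and complete, including the point that $A\ge 0$ is needed to multiply the inductive bound by $A$. The paper states this lemma in the appendix without proof, treating it as standard. Unrolling to $A^n|\xi_0|+B\sum_{i=0}^{n-1}A^i$ and then summing the geometric series, as you do, is exactly the standard argument.
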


\begin{lemma} \label{lem3.1} (\cite{ncz_pp_pm}) 
Let $\eta \in \mathbb{R}^d$ and let $f$ satisfy {\rm (F1)--(F4)}. Moreover, fix $\tau \in (0,+\infty)$ and 
$n \in \mathbb{Z}_+ \cup \{0\}$. Then the \eqref{main_eq} has a unique solution
\begin{equation}\label{3.1}
    z \in C^1([0,(n+1)\tau]; \mathbb{R}^d).
\end{equation}
Moreover, then there exist $K_0, K_1, \ldots, K_n \ge 0$ such that for $j = 0, 1, \ldots, n$
\begin{equation}\label{3.2}
    \sup_{j\tau \le t \le (j+1)\tau} \|\phi_j(t)\| \le K_j,
\end{equation}
and, for all $t, s \in [j\tau, (j+1)\tau]$
\begin{equation}\label{3.3}
    \|\phi_j(t) - \phi_j(s)\| \le \bar{K}_j |t - s|,
\end{equation}
with $\bar{K}_j = K(1 + K_{j-1})(1 + K_j)$, where $K_{-1} := \|\eta\|$.
\end{lemma}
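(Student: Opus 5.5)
The plan is the \emph{method of steps}: induction on $j=0,1,\dots,n$, reducing \eqref{main_eq} on each interval $[j\tau,(j+1)\tau]$ to an ordinary differential equation without delay. Set $\phi_{-1}\equiv\eta$ on $[-\tau,0]$, so that $\sup\|\phi_{-1}\|=K_{-1}=\|\eta\|$. Suppose $\phi_{j-1}$ is already constructed, continuous, and bounded by $K_{j-1}$. On $[j\tau,(j+1)\tau]$ I would put $\psi_j(t):=\phi_{j-1}(t-\tau)$ and $g_j(t,y):=f(t,y,\psi_j(t))$, and study the initial value problem
\[
y'(t)=g_j(t,y(t)),\qquad y(j\tau)=\phi_{j-1}(j\tau)\quad(\text{resp. } y(0)=\eta \text{ for } j=0).
\]
A function $z$ solves \eqref{main_eq} on $[-\tau,(n+1)\tau]$ exactly when each restriction $\phi_j$ solves this problem. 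Therefore existence, uniqueness and the bounds can all be proved interval by interval.

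\emph{Existence and the bound \eqref{3.2}.} By (F1), and since $\psi_j$ is continuous, $g_j$ is continuous. Peano's theorem then gives a local solution, extendable to a maximal one. By (F2),
\[
\|g_j(t,y)\|\le K(1+K_{j-1})(1+\|y\|).
\]
Along any solution, the integral form and Gr\"onwall's inequality give
\[
1+\|y(t)\|\le (1+\|y(j\tau)\|)\,e^{K(1+K_{j-1})\tau}.
\]
Since $\|y(j\tau)\|\le K_{j-1}$, I would define
\[
K_j:=(1+K_{j-1})e^{K(1+K_{j-1})\tau}-1.
\]
This a priori bound excludes blow-up, so the maximal solution exists on the whole interval $[j\tau,(j+1)\tau]$ and satisfies \eqref{3.2}. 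I expect this to be the main obstacle: no Lipschitz condition in $y$ is available, so Picard iteration cannot be used, and one has to combine Peano's theorem with the continuation argument and the linear-growth a priori estimate. As a fallback, existence could instead be obtained as a limit of the Euler polygons, which are uniformly bounded by Lemma~\ref{lem:boundedness_onesided} with $\delta=0$ and equicontinuous; Arzel\`a--Ascoli would then give a convergent subsequence.

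\emph{Uniqueness.} Let $y_1,y_2$ be two solutions with the same initial value, and set $u(t)=\|y_1(t)-y_2(t)\|^2$. Applying (F3) with the fixed delayed argument $z=\psi_j(t)$ gives
\[
u'(t)=2\langle y_1-y_2,\,g_j(t,y_1)-g_j(t,y_2)\rangle\le 2|H|(1+K_{j-1})\,u(t),
\]
and since $u(j\tau)=0$, Gr\"onwall's inequality yields $u\equiv0$. Uniqueness on every interval, together with the induction, gives uniqueness of $z$. I do not expect (F4) to be needed for this lemma.

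\emph{$C^1$ regularity and \eqref{3.3}.} On each interval $\phi_j'(t)=f(t,\phi_j(t),\psi_j(t))$ is continuous. At a junction point $t=j\tau$ with $j\ge1$, the left derivative is $f(j\tau,\phi_{j-1}(j\tau),\phi_{j-2}((j-1)\tau))$ and the right derivative is $f(j\tau,\phi_j(j\tau),\phi_{j-1}((j-1)\tau))$. These coincide because $\phi_j(j\tau)=\phi_{j-1}(j\tau)$ by construction and $\phi_{j-1}((j-1)\tau)=\phi_{j-2}((j-1)\tau)$ by continuity of $z$ established at the previous step. Hence $z\in C^1([0,(n+1)\tau];\mathbb{R}^d)$. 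Finally, (F2) and the bounds \eqref{3.2} give $\|\phi_j'(t)\|\le K(1+K_j)(1+K_{j-1})=\bar K_j$. The mean value inequality then yields \eqref{3.3}, with $K_{-1}=\|\eta\|$ covering the case $j=0$.
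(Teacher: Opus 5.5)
The paper gives no proof of this lemma and simply imports it from \cite{ncz_pp_pm}; your method-of-steps argument is correct and is the standard route to this result. Peano's theorem together with the linear-growth a priori bound from (F2) gives a solution on the whole of each delay interval, (F3) combined with Gr\"onwall's inequality gives uniqueness, the one-sided derivatives agree at the junctions $t=j\tau$, and (F2) bounds $\|\phi_j'\|$ by $\bar K_j$. You are also right that (F4) plays no role here.
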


\end{document}